\documentclass[preprint, 11pt]{imsart}
\RequirePackage[T1]{fontenc}
\usepackage{color}
\usepackage[latin1]{inputenc}
\usepackage[T1]{fontenc}

\usepackage{amsmath,amssymb,graphicx}
\usepackage{epstopdf}
\usepackage{color}
\RequirePackage{amsfonts,amsthm,amsmath}
\RequirePackage[colorlinks,citecolor=blue,urlcolor=blue]{hyperref}
\usepackage{a4wide}
\usepackage{here}
\usepackage{float}
\usepackage{amsmath}
\usepackage{bbm}
\allowdisplaybreaks

%
%
%
%
%

\newcommand{\nbR}{\mathbb{R}}
\newcommand{\nbN}{\mathbb{N}}

\newcommand{\nbu}{\mathbbm{1}}

\newcommand{\nbP}{\mathbb{P}}

\newcommand{\nbE}{\mathbb{E}}

\newcommand{\Z}{{\cal Z}}

\renewcommand{\l}{\ell}

\newtheorem{theorem}{{\bf Theorem}}
\newtheorem{lemm}[theorem]{{\bf Lemma}}

\newtheorem{remark}[theorem]{Remark}

\def\PP{{\mathbb P}}

\newcommand{\N}{\ensuremath{\mathbb{N}}}

\newcommand{\R}{\ensuremath{\mathbb{R}}}

\begin{document}

\begin{frontmatter}

\title{Multitype branching process with nonhomogeneous Poisson and generalized Polya  immigration
}
\runtitle{Multitype branching process with immigration}

\begin{aug}
\author{\fnms{\Large{Landy}}
\snm{\Large{Rabehasaina}}
\ead[label=e2]{lrabehas@univ-fcomte.fr}} \and \
\author{\fnms{\Large{Jae-Kyung}}
\snm{\Large{Woo}}
\ead[label=e1]{j.k.woo@unsw.edu.au}}
\runauthor{L.Rabehasaina and J.-K.Woo}

\address{\hspace*{0cm}\\
Laboratory of Mathematics of Besan\c con,\\
University Bourgogne Franche Comt\'e,\\
16 route de Gray, 25030 Besan\c con cedex, France.\\[0.2cm]
\printead{e2}}

\address{\hspace*{0cm}\\
School of Risk and Actuarial Studies,\\
Australian School of Business,\\ University of New South Wales Sydney, Australia.\\[0.2cm]
\printead{e1}}
\end{aug}

\vspace{0.5cm}

\begin{abstract}
In a multitype branching process, it is assumed that immigrants arrive according to a nonhomogeneous Poisson or a generalized Polya process (both processes are formulated as a nonhomogeneous birth process with an appropriate choice of transition intensities). We show that the renormalized numbers of objects of the various types alive at time $t$ for supercritical, critical, and subcritical cases jointly converge in distribution under those two different arrival processes. Furthermore, some transient moment analysis when there are only two types of particles is provided.
\end{abstract}
\begin{keyword}[class=AMS]
\kwd[Primary ]{60J80}
\kwd{60J85}
\kwd[; secondary ]{60K10}
\kwd{60K25}
\kwd{90B15}
\end{keyword}
\begin{keyword}
Multitype branching process with immigration; Nonhomogeneous Poisson process; Generalized Polya process; Convergence in distribution
\end{keyword}

\end{frontmatter}

\normalsize

\section{Introduction}\label{sec:intro}
We consider a multitype branching process in which there are different types of particles, and new particles arrive according to a Nonhomogeneous Poisson process (NHPP) or a generalized Polya process (GPP). Single or multitype branching processes with different stochastic assumptions on the immigration process have been applied in diverse fields in applied probability such as biology, epidemiology, and demography. For example, in \cite{R93}, the theory of multitype branching processes in discrete time with immigration was utilized to study the joint queue length process in the different queues of a polling system in queueing theory. More generally, network of infinite servers queues may be seen as multitype Galton Watson processes with immigration, see e.g. \cite{A05}, again for a discrete time model, although an extension to more general immigration arrival processes in continuous time network of infnite server queues may be available. 
Some actuarial application of branching processes such as a reinsurance chain was discussed in \cite[Section 7.5]{RSST98}. Also, with regards to applications in biology, see e.g a recent paper \cite{HPYP15} which considered multitype branching processes with homogeneous Poisson immigration to study stress erythropoiesis, although the authors pointed out that an nonhomogeneous Poisson process might be more realistic in that situation. The reader is referred to \cite{MYH18} for a detailed discussion about the relevant literature on various types of branching processes. 

For the immigration processes, an alternative to homogeneous Poisson process, NHPP and GPP are chosen in this paper for the following reasons. NHPP and GPP are within the class of non-homogeneous birth processes, which means the intensity of event occurrence possibly varies with the time (e.g. seasonality of catastrophe incidence) and/or the past state of the process (e.g. number of previous shocks, the number of accidents incurred in the past). In this regard, NHPP has been widely used in various areas such as engineering, applied probability, biological science, and actuarial science. Also, the Polya process (of which marginal distribution is viewed as a gamma mixture of Poisson distribution, see e.g. \cite[Section 5]{F43}) was discussed as a good candidate for the contagion model and further, in \cite{B55}, the generalized Polya scheme was considered to take individual's accident proneness and time effect into the model. 
In the literature of risk theory, contagion model with the Polya scheme presenting a linear type of contagion was discussed to model the number of accidents by \cite[Section 2.2]{B70}; depending on the choice 
of a parameter, this model is called positive or negative contagion model.
In particular, a positive contagion model in \cite{B70} (or so-called GPP in \cite{W10,C14,CF16}) would be a suitable choice for the arrival process, which well explains contagious events in case the more event arrived in the past, the more intense of event arrivals in the future. Since a branching process can be used to study a dynamic network of the spread of infectious diseases, it is natural to consider a GPP for the immigration arrival process as a suitable choice to model the occurrence of contagious events as explained above. Also, we refer to \cite{K10} for a more detailed discussion on the GPP which was provided in the framework of a non-stationary type master equation approach in mathematical physics. Assuming organisms are damaged by shocks which arrive according to the GPP in the extreme shock model, the problem of aging of organisms was studied in \cite{CF16a}. As discussed in \cite{CF16a}, a sequence of interarrival times of shocks (which are dependent on the number of past events) is decreasing in the GPP case. So this process is a realistic choice to model escalating impacts of damage on organisms which may significantly affect mortality. 

In this paper, our focus is to study the joint asymptotic behavior of a process representing the numbers of different types of particles alive at time $t$ when the immigration process is described by NHPP or GPP processes. Such a model may be interpreted differently in function of whether we are in an epidemic, actuarial, queueing or reliability setting. In an epidemic setting, the particles represent contaminated cells and the types represent their locations, under the assumption that those cells move to those other locations where they possibly contaminate other cells. In an actuarial setting, a particle may represent a certain type of claim or task that needs to be processed in different branches of an insurance company before being settled or in different stages of a reinsurance contract as explained in \cite[Section 7.5]{RSST98}. In a queueing setting, a particle is a customer who arrives and gets served immediately in the setting of infinite server queues and, after leaving the queue, is replicated into several new customers who are sent to other queues for the subsequent service. In a reliability setting, particles are interconnected parts in a system which can be damaged upon external shock arrivals and then are necessary to repair, or are dependent line outages in a power network which may cause cascading blackouts, see \cite{QJS16}. Besides, we consider all three different underlying branching mechanisms (supercritical, critical, and subcritical) while most papers in the literature consider the critical case, see \cite{W70, D71, W72, MYH18}. Indeed, it is well known that in the subcritical and critical cases for a continuous-time multitype Galton-Watson process, i.e. when the eigenvalue of the mean matrix of offsprings does not exceed $1$, the extinction is certain, whereas survival probability in infinite horizon is positive in the supercritical case. 
These three cases definitely exhibit different behaviors of the branching process when there is immigration. For example, in the case of polling systems, the stable case corresponds to subcritical branching process and the heavy traffic limit is studied using near critical branching process in \cite{V07}. 
Also, the fact that we are in critical, subcritical or supercritical condition may be adequate whether we are in one of the practical settings described above. For example, in a queueing or actuarial context it may be more plausible that we are in a critical or subcritical context, as the clients or tasks will eventually exit the system, whereas in the context of epidemiology, the rapid expansion of a particular disease in the beginning of the outbreaks may lead to consider a supercritical case. Concerning the arrival process, and as mentioned before, particular attention
in the forthcoming results is given to the case where the immigration rate increases very fast.
This is already the case in the classic GPP case, as the (stochastic) arrival rate is linear with
respect to the number of arrivals at current time, so that the interarrival time decreases with
respect to the failure rate order, as explained in
\cite{CF16a, BSC18}. As to the NHPP case, assumptions on 
the intensity function are such that an exponential behaviour for the latter is studied, yielding
different kind of asymptotic results. For a single type branching process with general lifetime distribution of the particles, the reader is referred to \cite{HMY16} and \cite{HMY17} which prove asymptotic results and functional central limit theorems respectively in the supercritical and subcritical cases.

Another aspect of this paper is that we also investigate the transient moment of the process
for two types of particles branching mechanism, when the renewal function associated with
the arrival process is explicit. See e.g.
\cite{HPYP15} for a similar study when the particular lifetime
particles have a general distribution.

The rest of the paper is organized as follows. In Section \ref{sec:model}, multitype branching process without/with immigration and relevant assumptions are described. It is necessary to include some known results and also to introduce notation for the later analysis. In Section \ref{sec:NHPP}, NHPP is assumed for the arrival process of immigrants.
Some convergence results for the distribution of the number of different types of particles, denoted in vector form as $N(t)$, are given in Theorem \ref{T1} and Theorem \ref{T1bis}, with a particular emphasis in the case when the intensity of the arrival process increases exponentially. A result in the critical case is given in Theorem \ref{T2}, which agrees with previous results in the same context in \cite{W72, V77, MYH18}. For the critical case, some remarks for homogeneous Poisson immigration and one dimensional branching process with immigration are provided in Remark \ref{RHPP}. In the following subsections \ref{sec:supercritical}, \ref{sec:subcritical}, and \ref{sec:critical} detailed proofs of Theorems \ref{T1}, \ref{T1bis} and \ref{T2} are given. Section \ref{sec:GPP} considers GPP for the immigration process. Asymptotic behaviors of $N(t)$ are studied in Theorem \ref{T3} in function of the parameters of the arrival process. The detailed proofs are included in the subsequent subsections \ref{sec:superc_gpp}, \ref{subsec:sc_gpp} and \ref{sec:crit_gpp} respectively. In the proofs of Theorems \ref{T1}, \ref{T1bis}, \ref{T2} and \ref{T3}, we shall show that, for a conveniently chosen normalizing function $g(t)$, the process $N(t)/g(t)$ converges in distribution to an identifiable limit as $t\to +\infty$ by showing that the corresponding Laplace Transform converges. Finally, some transient results for the moment when there are two types of particles in the branching process are presented in Section \ref{sec:app}. 

Lastly, the following matrix notation will be used throughout the paper. For any matrix $M\in \nbR^{m\times n}$, $M'\in \nbR^{n\times m}$ will denote its transpose. $<u,v>=\sum_{i=1}^k u_i v_i$ denotes the usual inner product between two vectors $u=(u_1,...,u_k)'$ and $v=(v_1,...,v_k)'$. We will let $\mathbf{1}=(1,...,1)'$, a vector with $1$'s of appropriate dimension, $\nbR_+^k=[0,+\infty)^k$ and $\nbR_+^{*k}=(0,+\infty)^k$. Finally, we will let $L^1(\nbR_+)$ the set of integrable measurable functions from $\nbR_+$ to $\nbR$.

\section{The model}\label{sec:model}

The baseline model, a classical multitype branching process (without immigration), is described as follows. We consider a set of particles of $k$ possible types, with a type $i$ particle having exponential lifetime with mean $1/\mu_i$ for $i=1,...k$, denoted by $\mathcal{E}(\mu_i)$ for $\mu_i >0$. Upon its death, a type $i$ particle produces $Y_j^{(i)}$ copies of type $j$ particles for all $j=1,...,k$, where $(Y_{1}^{(i)},...,Y_{k}^{(i)})$ is a random vector with corresponding probabilities $p_i(\mathbf{n})=p_i(n_1,...,n_k)=\nbP (Y_j^{(i)}=n_j,\ j=1,...,k)$ for $\mathbf{n}=(n_1,...,n_k)\in \nbN^k$, and generating functions defined by
\begin{equation}\label{def_gen_fc}
h_i(z)=h_i(z_1,...,z_k)= \sum_{\mathbf{n} \in \nbN^k} p_i(\mathbf{n}) z_1^{n_1}\cdots z_k^{n_k},\quad i=1,...,k,\quad z\in [0,1]^k .
\end{equation}
In other words, $p_i(\mathbf{n})$ is the probability that type $i$ particle produces $n_1,...,n_k$ copies of type $1,...,k$ particles respectively. Then all copies evolve independently and have the same dynamics. 
Note that $p_j(0,...,0)$ is the probability that no replica is made, i.e. the probability that the particle does not produce any copies at the end of its lifetime. The mean numbers of copies from type $i$ particle are denoted by $(m_{i,1},...,m_{i,k})=(\nbE[Y_{1}^{(i)}],...,\nbE[Y_{k}^{(i)}])$.  
We let the vector process $N^o(t)=(N^o_1(t),...,N^o_k(t))'$ where $N^o_j(t)$ represents the number of $j$ type particles at time $t$. In particular, at time 0 there is a single particle of type 1 (i.e. $N^o(0)=\mathbf{n}_0:=(1,0,...,0)'$). Its Laplace transform (LT) is denoted by $\varphi^o_{t}(s)=\nbE[e^{<s,N^o(t)>}]$ for $s \in(-\infty,0]^k$. According to \cite[Chapter V]{A72}, $\{N^o(t), t \geq 0\}$ is a continuous-time multitype branching process (without immigration). 

We recall some useful results which will be often used in the subsequent study. First, it is convenient to introduce a $k\times k$ matrix $A=(a_{ij})_{i,j=1,...,k}$ where the $a_{ij}$'s are defined by
\begin{equation}\label{matrixA}
a_{ij}=\mu_j(m_{ij}-\nbu_{[i=j]}),\quad i,j=1,...,k.
\end{equation}
We suppose that $A$ is regular i.e. all entries of the matrix $\exp(t_0 A)$ are positive for some $t_0>0$ (see \cite[Definition (10) p.202]{A72}). This entails that the largest eigenvalue $\rho$ of $A$ is positive and with multiplicity $1$. It is commonly known that we are in the {\it subcritical case} if $\rho<0$, in the {\it critical case} if $\rho =0$, and in the {\it supercritical case} if $\rho >0$. We let $u$ and $v$ be the $k\times 1$ right and left eigenvectors 
respectively, i.e. such that $Au=\rho u$ and $v'A=\rho v'$, with positive entries, and normalized in such a way that $<u,{\bf 1}>=1$ and $<u,v>=1$. Then, in \cite[Theorem 1 p.209]{A72} it was shown that 
$\{<u, N^o(t)e^{-\rho t}>,\ t\geq 0\}$ is a martingale. Also, from \cite[Theorem 2 p.206]{A72} the almost sure asymptotic behavior of $N^o(t)$ as $t\rightarrow +\infty$ is given in Lemma \ref{LTW} below.
\begin{lemm}\label{LTW}\normalfont
There exists a non-negative random variable (rv) $W$ such that
\[
\lim_{t\rightarrow +\infty} N^o(t) e^{-\rho t}=Wv,\qquad \mathrm{a.s.}
\]
\end{lemm}
Note that the conditional LT of $W$ given $N^o(0)=\mathbf{n}_0$
\[
\varphi_W(x):=\nbE[e^{-x W}|N^o(0)=\mathbf{n}_0],\quad x\ge 0,
\]
which will appear in the subsequent results, is in general not explicit but satisfies a particular integral equation (see \cite[Eq.(28) p.206]{A72} for detail).

We then move on to a multitype branching process with immigration which is the central stochastic process studied in this paper. Let us consider that a new particle (immigrant) arrives at time $T_i$, $i\ge  1$ and it is of type 1 (without loss of generality). Then it evolves according to the branching mechanism described at the beginning of this section. 
 The vector process $N(t)=(N_1(t),...,N_k(t))'$ represents the number of each type of particles at time $t$ defined as:
\begin{equation}\label{Nt}
N(t)=\sum^{S(t)}_{i=1} N^{o,i}(t-T_i),\qquad t\geq 0,
\end{equation}
where $\{N^{o,i}(t), t \geq 0\}_{i\in \N}$ are independent and identically distributed (iid) copies of $\{N^{o}(t), t \geq 0\}$ with $N^{o,i}(0)=\mathbf{n}_0$ and $\{ S(t), t\ge 0\}$ is the arrival process for new particles associated with a non-decreasing sequence $(T_i)_{i\in\nbN}$ with $T_0=0$ representing arrival times of the $i$th particle, with interarrival times $(T_i-T_{i-1})_{i\in \nbN^*}$. In other words, $N^{o,i}(t-T_i)$ is a vector of the number of particles in each system at time $t$ generated from the $i$th particle (of which type is $1$) arrived at $T_i$. Also, an underlying assumption is that $N^{o,i}_j(t)=0$ when $t<0$ for $j=1,2,...,k$. Hence, $N(t)$ is a continuous-time multitype branching process with immigration given by the process $\{ S(t), \ t\ge 0\}$. 

\section{Immigration modelled by Nonhomogeneous Poisson process (NHPP)}\label{sec:NHPP}
We assume in this section that $\{ S(t), t\ge 0\}$ is a NHPP with intensity $t\mapsto \lambda(t)>0 $, and set $\Lambda(t):= \int_0^t \lambda(y) dy$ for $t\ge 0$. 

To study the asymptotic behavior of $N(t)$ in (\ref{Nt}) when $t\rightarrow +\infty$, we first need the LT of $N(t)$. The following result is an easy extension of \cite[Equation (2)]{D71}; see also \cite[Theorem 1]{MYH18} for a similar result that rather concerns the probability generating function of $N(t)$.


\begin{lemm}\label{lemLTZt}\normalfont
The LT of $N(t)$ in (\ref{Nt}) admits the following expression
\begin{equation}\label{LTZt}
\varphi_t(s)=\nbE[e^{<s, N(t)>}]=\exp \bigg\{ \int^t_0 [\varphi^o_{t-x} (s)-1]\lambda(x)dx \bigg\}=\exp \bigg\{ \int^t_0 [\varphi^o_x (s)-1]\lambda(t-x)dx \bigg\},
\end{equation}
for all $ s \in (-\infty,0]^k$.
\begin{proof}
Since, given $S(t)=n$, $(T_1,...,T_n)$ are distributed as the ordered statistics $(U_{(1)},...,U_{(n)})$ with $(U_1,...,U_n)$ which are  independent with density $y\mapsto \frac{\lambda(y)}{\Lambda(t)} \nbu_{[0,t]}(y) $, one finds
\[
\varphi_t(s)=\sum^\infty_{n=0} \nbE \bigg[\exp \bigg\{<s, \sum^n_{i=1}N^{o,i} (t-U_{(i)})>\bigg\} \bigg] \times e^{-\Lambda (t)} \frac{(\Lambda (t))^n}{n!}.
\]
Since $\sum^n_{i=1}N^{o,i} (t-U_{(i)})=\sum^n_{i=1}N^{o,i} (t-U_i)$ and by independence of $(U_1,...,U_n)$ and the process $\{N^{o,i}(t), t\geq 0\}$, one obtains
\[
\varphi_t(s)=\sum^\infty_{n=0} \bigg\{\frac{1}{\Lambda(t)} \int^t_0 \nbE[\exp(<s, N^o(t-y)>)] \lambda(y)dy \bigg\}^n
 \times e^{-\Lambda (t)} \frac{(\Lambda (t))^n}{n!}.
\]
A change of variable $x:=t-y$ concludes the proof.
\end{proof}
\end{lemm}
The following results show that the renormalized process converges towards different limits
depending on the assumptions on the intensity of the arrival process.
\begin{theorem}\label{T1}\normalfont
Let us suppose that the intensity $t\mapsto \lambda(t)$ of the NHPP $\{S(t),\ t \ge 0 \}$ satisfies that $t\mapsto e^{-\rho t} \lambda(t)$ is integrable. Then
\begin{equation}\label{convergence_supercritical}
e^{-\rho t} N(t) \stackrel{\cal D}{\longrightarrow} \int_0^\infty e^{-\rho z} d{\cal Y}^W_z, \qquad t\rightarrow +\infty,
\end{equation} 
where $ \{{\cal Y}^W_t,\ t\ge 0\}$ is a nonhomogeneous compound Poisson process with intensity $y\mapsto \lambda (y)$ and jumps distributed as $Wv$.

\begin{proof}
The proof is given in Section \ref{sec:supercritical}
\end{proof}
\end{theorem}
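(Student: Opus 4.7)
The plan is to establish pointwise convergence of the Laplace transform of $e^{-\rho t}N(t)$ on $s\in(-\infty,0]^k$; distributional convergence then follows by the continuity theorem for Laplace transforms of random vectors in $\nbR_+^k$. Fix $s\in(-\infty,0]^k$. By Lemma \ref{lemLTZt},
$$\nbE\bigl[e^{\langle s,e^{-\rho t}N(t)\rangle}\bigr]=\varphi_t(e^{-\rho t}s)=\exp\bigg\{\int_0^t\bigl[\varphi^o_{t-x}(e^{-\rho t}s)-1\bigr]\lambda(x)\,dx\bigg\},$$
so the whole matter reduces to passing to the limit inside the integral.

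For the pointwise limit of the integrand, fix $x\ge 0$. Lemma \ref{LTW} gives $e^{-\rho(t-x)}N^o(t-x)\to Wv$ almost surely, hence $e^{-\rho t}\langle s,N^o(t-x)\rangle\to e^{-\rho x}\langle s,v\rangle W$. Since $s\le 0$ the exponentials are bounded by one and dominated convergence in the inner expectation yields
$$\varphi^o_{t-x}(e^{-\rho t}s)\ \xrightarrow[t\to\infty]{}\ \nbE\!\left[e^{e^{-\rho x}\langle s,v\rangle W}\right]=\varphi_W\bigl(-e^{-\rho x}\langle s,v\rangle\bigr).$$

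To justify the interchange with the outer integral I would produce a $t$-uniform dominating function as follows. Using the elementary bound $1-e^u\le-u$ for $u\le 0$ together with the componentwise inequality $-s\le Cu$ with $C:=\max_i(-s_i/u_i)$ (recall the right eigenvector $u$ has strictly positive entries), one obtains
$$0\le 1-\varphi^o_{t-x}(e^{-\rho t}s)\le e^{-\rho t}\nbE\bigl[\langle -s,N^o(t-x)\rangle\bigr]\le C\,e^{-\rho t}\nbE\bigl[\langle u,N^o(t-x)\rangle\bigr].$$
The martingale property of $\{\langle u,N^o(r)\rangle e^{-\rho r}\}_{r\ge 0}$ recalled in the paper (with value $\langle u,\mathbf{n}_0\rangle=u_1$ at $r=0$) gives $\nbE[\langle u,N^o(t-x)\rangle]=u_1\,e^{\rho(t-x)}$, whence
$$\bigl|\varphi^o_{t-x}(e^{-\rho t}s)-1\bigr|\le Cu_1\,e^{-\rho x},$$
which is integrable against $\lambda(x)\,dx$ precisely by the hypothesis $e^{-\rho\cdot}\lambda\in L^1(\nbR_+)$.

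Dominated convergence then delivers
$$\int_0^t\bigl[\varphi^o_{t-x}(e^{-\rho t}s)-1\bigr]\lambda(x)\,dx\ \xrightarrow[t\to\infty]{}\ \int_0^\infty\bigl[\varphi_W(-e^{-\rho x}\langle s,v\rangle)-1\bigr]\lambda(x)\,dx,$$
and it remains to identify the right-hand side as the log-Laplace transform of $\int_0^\infty e^{-\rho z}\,d\mathcal{Y}^W_z=\sum_{i\ge 1}e^{-\rho T_i}W^{(i)}v$; this is Campbell's formula for a nonhomogeneous compound Poisson integral with jumps distributed as $Wv$, proved by conditioning on $S(t)$ and $(T_1,\dots,T_{S(t)})$. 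The a.s.\ convergence of the defining series is guaranteed by the same integrability assumption, since $\nbE[\sum_i e^{-\rho T_i}]=\int_0^\infty e^{-\rho y}\lambda(y)\,dy<\infty$. The main obstacle in the argument is the construction of the $t$-uniform dominating bound; the trick is to reduce it to the already-available martingale by exploiting positivity of the right eigenvector, thereby avoiding any further spectral analysis of the matrix $A$.
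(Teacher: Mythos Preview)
Your proof is correct and uses essentially the same ingredients as the paper's: the martingale $\{\langle u,N^o(r)\rangle e^{-\rho r}\}$ to manufacture an integrable dominating function $Cu_1 e^{-\rho x}\lambda(x)$, Lemma~\ref{LTW} for the pointwise limit of the integrand, and Campbell's formula to identify the limiting Laplace transform. The only difference is organizational: the paper splits the exponent as $Q_t=Q_{1,t}+Q_{2,t}$ (difference term plus target term) and handles each piece separately, whereas you apply dominated convergence in one shot by bounding $|\varphi^o_{t-x}(e^{-\rho t}s)-1|$ directly; your route is a bit more economical but the substance is the same.
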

Although Theorem \ref{T1} is valid when the eigenvalue $\rho$ has any sign, it is especially interesting in the supercritical case $\rho>0$, as $t\mapsto e^{-\rho t} \lambda(t)\in L^1(\R_+)$ implies that the intensity $\lambda(t)$ can for example grow exponentially as $e^{\delta t}$ for $0\le \delta<\rho$. However Theorem \ref{T1} becomes less interesting in the critical case $\rho=0$ or subcritical case $\rho < 0$, as the condition $t\mapsto e^{-\rho t} \lambda(t)\in L^1(\R_+)$ roughly means that the intensity tends to $0$ potentially very fast. The following result supplements and shows  that the renormalized process $N(t)$ converges in distribution or in probability 
\begin{itemize}
\item when the intensity grows exponentially in the critical or subcritical case,
\item when the intensity grows exponentially as $e^{\delta t}$ with $\delta\ge \rho$ in the supercritical case, complementing Theorem \ref{T1}.
\end{itemize}
\begin{theorem}\label{T1bis}\normalfont
Let us suppose that the intensity $t\mapsto \lambda(t)$ of the NHPP $\{S(t),\ t \ge 0 \}$ satisfies $\lambda(t)\sim \lambda_\infty e^{\delta t}$ as $t\to +\infty$ for some $\delta\ge 0$ and $\lambda_\infty >0$. 
Then the following convergences hold as $t\to+\infty$:
\begin{eqnarray}
\mbox{(i)}\quad N(t) &  \stackrel{\cal D}{\longrightarrow}& \nu \quad \mbox{if } \rho<0\mbox{ and }\delta=0, \label{subcritical_gamma_zero}\\
\mbox{(ii)}\quad e^{-\delta t} N(t) & \stackrel{\PP}{\longrightarrow} & \lambda_\infty (\delta I -A)^{-1}\mathbf{n}_0 \quad \mbox{if } [\rho \le 0\mbox{ and }\delta>0]\mbox{ or }[\rho>0\mbox{ and } \delta>\rho], \label{subcritical_gamma_positive}\\
\mbox{(iii)}\quad e^{-\delta t} \frac{N(t)}{t} & \stackrel{\PP}{\longrightarrow} & \lambda_\infty v_1 u\ \quad \mbox{if }\rho=\delta >0 , \label{supercritical_gamma_equal}
\end{eqnarray}
where $\nu$ is a distribution on $\nbR^k_+$ with LT given by
\[
\int_{\nbR^k_+} e^{<s,x>}\nu(dx)=\exp \bigg\{ \lambda_\infty \int^{+\infty}_0 [\varphi^o_y(s)-1] \;  dy\bigg\},\qquad   s \in (-\infty,0]^k.
\]
\end{theorem}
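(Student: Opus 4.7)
The plan is to apply Lemma \ref{lemLTZt} and show pointwise convergence of the appropriately rescaled LT
\[
\varphi_t\bigl(g(t)^{-1}s\bigr)=\exp\left\{\int_0^t\bigl[\varphi^o_x\bigl(g(t)^{-1}s\bigr)-1\bigr]\lambda(t-x)\,dx\right\},\qquad s\in(-\infty,0]^k,
\]
taking $g(t)=1$ in (i), $g(t)=e^{\delta t}$ in (ii) and $g(t)=te^{\delta t}$ in (iii). Since convergence in probability to a deterministic vector is equivalent to convergence in distribution to its Dirac mass, in each case it suffices to identify the limit of the exponent as the logarithm of the LT at $s$ of the target distribution.

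For (i), the subcritical assumption $\rho<0$ combined with Lemma \ref{LTW} implies that $N^o(x)\to 0$ a.s. exponentially fast, and the bound $|1-\varphi^o_x(s)|\le \langle -s,\nbE[N^o(x)]\rangle=\langle -s,e^{xA}\mathbf{n}_0\rangle$ shows that $x\mapsto|\varphi^o_x(s)-1|$ decays like $e^{\rho x}$ and hence is integrable on $\R_+$. Since $\delta=0$ forces $\lambda(t-x)\to\lambda_\infty$ uniformly bounded, dominated convergence yields $\int_0^t[\varphi^o_x(s)-1]\lambda(t-x)\,dx\to\lambda_\infty\int_0^\infty[\varphi^o_x(s)-1]\,dx$, identifying the limit as the log-LT of $\nu$.

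For (ii) and (iii), the key tool is a first order Taylor expansion of $\varphi^o_x$ at the origin: setting $M(x):=\nbE[N^o(x)]=e^{xA}\mathbf{n}_0$,
\[
\varphi^o_x(w)-1=\langle w,M(x)\rangle + R(x,w),\qquad |R(x,w)|\le\tfrac{1}{2}\nbE\bigl[\langle w,N^o(x)\rangle^2\bigr],\quad w\in(-\infty,0]^k.
\]
Applied with $w=g(t)^{-1}s$ and using $\lambda(t-x)\sim\lambda_\infty e^{\delta(t-x)}$, the leading contribution to the exponent becomes
\[
\frac{1}{g(t)}\int_0^t \langle s,M(x)\rangle\lambda(t-x)\,dx\ \sim\ \frac{\lambda_\infty e^{\delta t}}{g(t)}\int_0^t \langle s,e^{x(A-\delta I)}\mathbf{n}_0\rangle\,dx.
\]
In case (ii), $\delta>\rho$ ensures that the eigenvalues of $A-\delta I$ have strictly negative real parts, so $\int_0^\infty e^{x(A-\delta I)}\,dx=(\delta I-A)^{-1}$; with $g(t)=e^{\delta t}$ this produces the limit $\lambda_\infty\langle s,(\delta I-A)^{-1}\mathbf{n}_0\rangle$. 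In case (iii), $\delta=\rho$ and the spectral decomposition of $A$ gives $e^{x(A-\rho I)}\to uv'$ as $x\to\infty$; a Ces\`aro argument yields $\frac{1}{t}\int_0^t e^{x(A-\rho I)}\,dx\to uv'$, and since $v'\mathbf{n}_0=v_1$, with $g(t)=te^{\delta t}$ the limit equals $\lambda_\infty v_1\langle s,u\rangle$.

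The main obstacle I anticipate is controlling the Taylor remainder $R(x,g(t)^{-1}s)$ integrated against $\lambda(t-x)$: one must verify that
\[
\frac{1}{g(t)^2}\int_0^t\nbE\bigl[\langle s,N^o(x)\rangle^2\bigr]\lambda(t-x)\,dx\longrightarrow 0.
\]
Standard second moment estimates for multitype branching processes give $\nbE\bigl[\langle s,N^o(x)\rangle^2\bigr]=O(e^{2\rho x})$ (with at most polynomial corrections in degenerate cases), while $\lambda(t-x)=O(e^{\delta(t-x)})$. The resulting integral is $O(e^{\max(\delta,2\rho)t})$ up to polynomial factors, and dividing by $g(t)^2$ gives $o(1)$ exactly under the hypotheses $\delta>\rho$ in (ii) and $\delta=\rho>0$ in (iii), where in the latter the extra $t^{-2}$ factor absorbs the polynomial corrections. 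A similar truncation argument, splitting the integration range at a cutoff beyond which $|\lambda(t-x)-\lambda_\infty e^{\delta(t-x)}|\le\epsilon\,\lambda_\infty e^{\delta(t-x)}$, justifies replacing $\lambda$ by its exponential asymptote in the leading order computation.
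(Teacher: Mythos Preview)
Your approach coincides with the paper's: both split the log-LT exponent into the linear term $\langle g(t)^{-1}s,\nbE[N^o(x)]\rangle=\langle g(t)^{-1}s,e^{xA}\mathbf{n}_0\rangle$ and a second-order remainder bounded via $|e^u-1-u|\le u^2$ for $u\le 0$, then pass to the limit by dominated convergence, using $\int_0^\infty e^{x(A-\delta I)}\,dx=(\delta I-A)^{-1}$ in (ii) and the Perron--Frobenius limit $e^{x(A-\rho I)}\to uv'$ together with a Ces\`aro argument in (iii).

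One point needs correction. The second-moment estimate $\nbE[\langle s,N^o(x)\rangle^2]=O(e^{2\rho x})$ is valid only for $\rho>0$. When $\rho<0$ the correct bound is $O(e^{\rho x})$ (which is \emph{larger} than $e^{2\rho x}$, so your stated bound is too strong), and when $\rho=0$ it is $O(x)$; see \cite[(19)--(21), p.204]{A72}. For this reason the paper treats the three subcases $\rho<0$, $\rho=0$, $\rho>0$ of part (ii) separately. Your conclusion is unaffected---plugging in the correct bounds one still obtains $g(t)^{-2}\int_0^t\nbE[\langle s,N^o(x)\rangle^2]\lambda(t-x)\,dx\to 0$ in each subcase---but the justification as written is inaccurate and should be split along these lines.
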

\begin{proof}
The proof is presented in Section \ref{sec:subcritical}. 
\end{proof}

In the following, we are especially interested in the particular critical case $\rho=0$. From Theorems \ref{T1} and \ref{T1bis}: if $t\mapsto\lambda(t)$ is integrable then $N(t)$ converges in distribution to $\lim_{t\to +\infty}{\cal Y}^W_t$ in \eqref{convergence_supercritical}, and if  $\lambda(t)\sim \lambda_\infty e^{\delta t}$ with $\delta>0$ then one has the convergence in probability of $e^{-\delta t}N(t)$ in \eqref{subcritical_gamma_positive}. We note that an intermediary case is worth to explore when $\lambda(t)$ does not have an explosive behaviour or, roughly speaking, does not converge to $0$. This is the case if the associated Cesaro limit $\lim_{t\to \infty}\Lambda(t)/t=\lambda_\infty $ exists, as some additional convergence result may be obtained. Before detailing this convergence result, we introduce the following quantities:
\begin{eqnarray}
 Q&:=&\frac{1}{2}\sum^k_{i,\l,n=1} \frac{\partial^2 h_i}{\partial z_\l \partial z_n}(1,...,1) u_\l u_n v_i >0,\nonumber\\
\beta&:=& \bigg(\sum^k_{\l=1}  \mu_\l^{-1}  u_\l v_\l \bigg)\frac{u_1}{Q},\label{def_beta}\\
c&:=&\frac{ (\sum^k_{\l=1}  \mu_\l^{-1}  u_\l v_\l )^2}{Q},\label{def_c}
\end{eqnarray}
where we recall that $h_i(z)=h_i(z_1,...,z_k)$ is the generating function associated to $(p_i(\mathbf{n}))_{\mathbf{n} \in \nbN^k}$ given in \eqref{def_gen_fc}. The following result holds.
\begin{theorem}\label{T2}\normalfont
Let us assume that the moments of all orders of the random vector $(Y_{1}^{(i)},...,Y_{k}^{(i)})$ exist for all $i=1,...,k$ and the intensity admits a Cesaro finite limit $\lambda_\infty=\lim_{t\to \infty}\Lambda(t)/t > 0$. When $\rho=0$ (critical case), one has the convergence in distribution as
\begin{equation}\label{conv_distrib_critical}
\frac{N(t)}{t}\stackrel{\cal D}{\longrightarrow} {\cal Z} v\otimes  \mu^{-1} , \qquad t\rightarrow +\infty,
\end{equation} 
where ${\cal Z}$ is a rv distributed as $\Gamma (\lambda_\infty \beta, c)$  with $v\otimes  \mu^{-1} =(v_1 \mu_1^{-1},...,v_k \mu_k^{-1})$, $\beta$ and $c$ given by \eqref{def_beta} and \eqref{def_c} respectively. Here, $\Gamma (\alpha, \theta)$ denotes the gamma distribution with a shape parameter $\alpha$ and a rate parameter $\theta$.
\begin{proof}
The proof is presented in Section \ref{sec:critical}. 
\end{proof}
\end{theorem}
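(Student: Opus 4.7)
The plan is to apply L\'evy's continuity theorem by showing that the Laplace transform $\varphi_t(s/t) = \nbE[e^{<s,\, N(t)/t>}]$ converges pointwise, as $t\to+\infty$ for each $s\in(-\infty,0]^k$, to $(1 - a/c)^{-\lambda_\infty\beta}$ with $a := <s,\, v\otimes\mu^{-1}> \le 0$. This target is exactly the Laplace transform of $\mathcal{Z}\, v\otimes\mu^{-1}$ when $\mathcal{Z}\sim\Gamma(\lambda_\infty\beta, c)$. By Lemma~\ref{lemLTZt} with the change of variable $u = t - x$,
\[
\log \varphi_t(s/t) = \int_0^t [\varphi^o_{t-u}(s/t) - 1]\, \lambda(u)\, du,
\]
so the whole task reduces to evaluating the limit of this integral.

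The first key input is the classical Yaglom-type asymptotic for the critical multitype Markov branching process: with $N^o(0) = \mathbf{n}_0$, one has $t\,\nbP(N^o(t) \neq 0) \to \beta$, and conditionally on $\{N^o(t) \neq 0\}$ the vector $N^o(t)/t$ converges in distribution to $\xi\, v\otimes\mu^{-1}$ with $\xi$ exponentially distributed with parameter $c$ (the constants $\beta, c$ being those of \eqref{def_beta}--\eqref{def_c}). Using $<s/t,\, N^o(ty)> = y\,<s,\, N^o(ty)/(ty)> \longrightarrow y a\,\xi$ for any fixed $y > 0$, this yields
\[
\chi_t(y) := t\, [\varphi^o_{ty}(s/t) - 1] = t\,\nbP(N^o(ty) \neq 0)\, \nbE[e^{<s/t,\, N^o(ty)>} - 1 \mid N^o(ty) \neq 0] \longrightarrow \chi(y) := \frac{\beta a}{c - y a}.
\]
The hypothesis that the offspring have moments of all orders yields local uniform bounds on $\chi_t$ permitting the exchange of limit and integration.

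The second ingredient is a Cesaro/Abel-type argument transferring the hypothesis $\Lambda(t)/t \to \lambda_\infty$ into the integral. After the substitution $u = t(1-v)$,
\[
\int_0^t [\varphi^o_{t-u}(s/t) - 1]\,\lambda(u)\,du = \int_0^1 \chi_t(v)\,\lambda(t(1-v))\,dv,
\]
and integration by parts against $\Lambda$, together with $\Lambda(tw)/(tw) \to \lambda_\infty$ for every $w>0$ and the bounds on $\chi_t$ from the previous step, gives
\[
\int_0^1 \chi_t(v)\, \lambda(t(1-v))\, dv \longrightarrow \lambda_\infty \int_0^1 \chi(v)\, dv.
\]
A direct computation then yields
\[
\lambda_\infty \int_0^1 \frac{\beta a}{c - y a}\, dy = -\lambda_\infty \beta \log(1 - a/c) = \log \nbE[e^{\mathcal{Z} a}],
\]
and L\'evy's theorem concludes.

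The main obstacle, in my view, is the first step: establishing the Yaglom asymptotic above with the constants $\beta$ and $c$ matching exactly those defined in the paper. The underlying argument proceeds through the backward Kolmogorov equation $\partial_t F = B\, F + Q_2(F) + O(|F|^3)$ satisfied by $F(t,s) = \mathbf{1} - \varphi^o(t, e^s)$, in which the linearisation $B$ has $0$ as a simple eigenvalue in the critical case while $Q_2$ encodes the Hessians of the $h_i$'s; decomposing $F = \phi(t, s)\, u + G(t, s)$ along the zero-eigenspace and its spectral complement produces a scalar Riccati equation $\phi' \sim -\tilde{Q}\,\phi^2$, whose solution decays like $1/t$ with the appropriate prefactor once $\tilde Q$ is identified in terms of $Q$ and $\sum_\ell \mu_\ell^{-1} u_\ell v_\ell$, and $G$ is absorbed by the spectral gap. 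A secondary technicality is the non-uniformity of $\chi_t \to \chi$ near $v=0$, which is handled by splitting the integral and using the explicit bound $\chi_t(0) = t(e^{s_1/t}-1) = O(1)$.
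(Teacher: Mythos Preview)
Your proposal is correct and follows the same high-level strategy as the paper: start from the Laplace transform in Lemma~\ref{lemLTZt}, feed in the Yaglom-type asymptotics for the critical branching process (i.e.\ $t\,\nbP(N^o(t)\neq 0)\to\beta$ and the exponential conditional limit with rate $c$), obtain pointwise convergence of the integrand together with a uniform bound, apply dominated convergence, and identify the limiting LT as that of a $\Gamma(\lambda_\infty\beta,c)$ variable times $v\otimes\mu^{-1}$.

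The one genuine technical difference lies in how the Cesaro hypothesis $\Lambda(t)/t\to\lambda_\infty$ is absorbed. You perform the linear substitution $u=t(1-v)$ and then invoke an integration-by-parts argument against $\Lambda$, which in effect requires control on $\partial_v\chi_t(v)$ (hence on $\partial_y\varphi^o_{y}$) to pass to the limit; this is where your ``moments of all orders'' assumption would actually be used. The paper instead performs the nonlinear change of variable $y\mapsto \Lambda^{-1}(\Lambda(t)x)$, which turns the weight $\lambda(y)\,dy$ into $\Lambda(t)\,dx$ on $[0,1]$; the domination then reduces to the single bound $C_\lambda=\sup_{t}\Lambda(t)/t<\infty$ combined with the martingale estimate $\nbE[<u,N^o(r)>]=u_1$, requiring only first moments and no derivative control. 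Both routes work; the paper's is cleaner and uses less.

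Finally, what you flag as the ``main obstacle'' (deriving the Yaglom limit with the exact constants $\beta$ and $c$) is not addressed at all in the paper: it is simply imported from \cite{W70} as Equations~\eqref{conv_critical_tool1}--\eqref{conv_critical_tool2}. Your Riccati sketch is the right way to recover those constants if one had to, but for the purposes of this theorem it is a black box.
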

Thus, it turns out that, in the critical case, the support of the limits \eqref{convergence_supercritical}, \eqref{subcritical_gamma_positive} and \eqref{conv_distrib_critical} are respectively the positive half line spanned by $v$, $(\delta I -A)^{-1}\mathbf{n}_0$ and $v\otimes  \mu^{-1} $. It is worthwhile
to point out that all three results concern the critical case, but with different assumptions on
the intensity function.
\begin{remark}\normalfont\label{RHPP}
When the intensity $\lambda(t)$ is constant equal to $\lambda$, Theorem \ref{T2} is the particular case of \cite[Theorem 2]{W72} which considers general interarrival times, with a slight change of notation (in that reference $\mu_i$ stands for the mean lifetime of a type $i$ particle, as opposed to $\mu_i^{-1}$ here). See also \cite[Theorem 1]{V77} for a similar result. When $\lambda(t)$ converges to some limit $\lambda_\infty$, it converges towards the same limit in the sense of Cesaro and the limit in distribution \eqref{conv_distrib_critical} corresponds to \cite[Theorem 8]{MYH18}. The proof of Theorem \ref{T2} (given in Section \ref{sec:critical}) is however original in the following sense. Contrarily to \cite{W72} which proves the result by showing that the joint moments of $N(t)/t$ converge, it does not require renewal arguments and relevant results. Instead, we start directly with the LT \eqref{LTZt} which is expressed handily in Lemma \ref{lemLTZt} and study its convergence. Similar approach was adopted in \cite{MYH18} although the authors in \cite{MYH18} start the proof from a seemingly uniform estimate from \cite{S71} for the probability generating function of $\{N^o(t),\ t\ge 0 \}$.
\end{remark}

\begin{remark}\normalfont
We remark that from (\ref{asyLTNtt}) in the proof of Theorem \ref{T2}, the limiting distribution of \eqref{conv_distrib_critical} admits a similar integral form as the right-hand side of \eqref{convergence_supercritical} which is available by applying Campbell's formula (the details are given in the beginning of Section \ref{sec:supercritical}). Indeed, one checks the equality in distribution of ${\cal Z}v\otimes  \mu^{-1} $ and $\int^{+\infty}_0 e^{-  t} d{\cal Y}^c_t$ where $\{{\cal Y}^c_t, t\geq 0\}$ is a compound Poisson process with intensity $\lambda_\infty \beta$ and jumps distributed as $ \chi v\otimes   \mu^{-1} $ with $\chi\sim {\cal E}(c)$.
\end{remark}

We now proceed to the proofs of Theorems \ref{T1}, \ref{T1bis} and \ref{T2}.
 
\subsection{Proof of Theorem \ref{T1}}\label{sec:supercritical}

We start from the LT in (\ref{LTZt}), which entails that the LT of $e^{-\rho t} N(t)$ is given by
\[
\varphi_t(se^{-\rho t})= \exp \bigg\{ \int^t_0 [\varphi^o_x (se^{-\rho t})-1] \lambda(t-x)dx \bigg\} ,\quad s\in (-\infty,0]^k.
\]
The main difficulty in the proof is to show the following convergence:
\begin{equation}\label{p1}
\int^t_0 [\varphi^o_{x} (se^{-\rho t})-1]\lambda(t-x)dx \longrightarrow \int^{+\infty}_0 [\varphi_{Wv}(s e^{-\rho x})-1] \lambda(x )\; dx,\qquad t\rightarrow +\infty,\ s \in (-\infty,0]^k,
\end{equation}
where $\varphi_{Wv}$ is the LT of $Wv$. By Campbell's formula (see \cite[Formula (2.9), Theorem 2.7 p.41]{K06}), $\exp \{ \int^{+\infty}_0 [\varphi_{Wv} (se^{-\rho y})-1]\lambda(y )  dy\}$ is the LT of $\int_0^\infty e^{-\rho z} d{\cal Y}^W_z$, where $ \{{\cal Y}^W_t,\ t\ge 0\}$ is a nonhomogeneous compound Poisson process with intensity $ y\mapsto \lambda(y)$ and jumps distributed as $Wv$. Hence one has the convergence in distribution of $e^{-\rho t} N(t)$ towards $\int_0^\infty e^{-\rho z} d{\cal Y}^W_z$ if (\ref{p1}) holds. 
This proves \eqref{convergence_supercritical}.

So, in order to prove \eqref{p1} the main idea here is to exploit the convergence $N^o(y) e^{-\rho y} \longrightarrow Wv$ a.s. as $y \rightarrow +\infty$ given in Lemma \ref{LTW}. Studying (\ref{p1}) is equivalent to analyze the limit as $t\rightarrow +\infty$ of
\begin{equation}\label{def_Qt}
Q_t := \int^{t}_0 [\varphi^o_{t-x} (se^{-\rho t})-1] \lambda(x)\; dx = \int^{t}_0  \{ \nbE[\exp (<s, N^o(t-x)e^{-\rho t}>)]-1 \} \lambda(x) \; dx .
\end{equation}
That is, $Q_t$ may be expressed as
\begin{equation}\label{limith}
Q_t:=Q_{1,t}+Q_{2,t}
\end{equation}
where
\begin{eqnarray}
Q_{1,t}&:= &  \int^t_0  \bigg\{ \nbE\bigg[\exp \bigg(<s, \frac{N^o(t-x)}{e^{\rho (t-x)}}e^{-\rho x}>\bigg) - \exp(<s, Wv e^{-\rho x>})\bigg] \bigg\}\nonumber\\
 && \times \lambda(x)\; dx,\label{Q1h}\\
Q_{2,t}&:=& \int^t_0  \{ \nbE[\exp (<s, Wv e^{-\rho x>})]-1 \}  \lambda(x)\; dx. \label{Q2h}
\end{eqnarray}
We then separately examine the limits of (\ref{Q1h}) and (\ref{Q2h}). In the end, it will be shown that (\ref{Q1h}) tends to 0 and (\ref{Q2h}) tends to the right-hand side of \eqref{p1}.

{\bf Limit of $Q_{1,t}$ in (\ref{Q1h}) as $t\rightarrow +\infty$.} We shall utilize the following basic inequality in the subsequent proof:
\begin{equation}\label{A}
|e^a-e^b|\leq |a-b|,\qquad a\leq0,~b\leq 0,
\end{equation}
due to the finite increment formula and also we have $|e^a-e^b| \leq e^a+e^b \leq 2$. 
Hence $|e^a-e^b|\leq |a-b| \wedge 2$ for $a\leq0$ and $b\leq 0$. We then deduce that
\begin{align}\label{Q1ha}
|Q_{1,h}| \leq &  \int^{t}_0 \nbE\bigg[\bigg| <s, \frac{N^o(t-x)}{e^{\rho (t-x)}}e^{-\rho x}>- <s, Wv e^{-\rho x}>\bigg|\wedge 2 \bigg]   \lambda(x)dx\nonumber\\
&~~~=   \int^{\infty}_0  \nbu_{[0\leq x \leq t]}  \nbE\bigg[\bigg| <s, \frac{N^o(t-x)}{e^{\rho (t-x)}}e^{-\rho x}>- <s, Wv e^{-\rho x}>\bigg|\wedge 2 \bigg]   \lambda(x)dx .
\end{align}
By the dominated convergence theorem, it will be shown that (\ref{Q1ha}) tends to zero as $t \to +\infty$ in the following. From the pointwise convergence as $t \to +\infty$ in Lemma \ref{LTW} with the help of the dominated convergence theorem, one finds that the integrand goes to zero i.e.
\[
\nbu_{[0\leq x \leq t]}    \nbE \bigg[  \bigg| e^{-\rho x} <s, \frac{N^o(t-x)}{e^{\rho (t-x)}}>-e^{-\rho x} <s, Wv> \bigg|\wedge 2 \bigg] \lambda(x)\longrightarrow 0,\quad t\rightarrow \infty ,\ \forall x\ge 0 .
\]
We next want to find an upper bound of this integrand by some function $x \mapsto f(x) \geq  0$ such that $\int^{+\infty}_0 f(x) dx  < +\infty$. Recall that $u$ is an eigenvector with positive entries $u_i$ for $i=1,...,k$ such that $Au=\rho u$ (where the elements of the matrix $A$ are defined in (\ref{matrixA})). Since $u_i >0$ for all $i$, there exists some constant $\kappa>0$ which is large enough satisfying \begin{equation}\label{sj}
0\leq -s_j \leq \kappa u_j, \qquad \forall j=1,...,k,
\end{equation}
where we recall that the vector $s=(s_1,...,s_k)$ is fixed.
For example, $\kappa$ can be chosen as $\max_{j=1,...,k} -s_j/ u_j$. Also, note that $\nbE[ (X+Y) \wedge 2] \leq (\nbE[X]+\nbE[Y]) \wedge 2$ for nonnegative random variables $X$ and $Y$. Combining these results together with the martingale property of $\{<u, N^o(t)e^{-\rho t}>, t\geq 0\}$, we conclude that the integrand is bounded as
\begin{align}\label{Q1hinteg}
&\nbu_{[0\leq x \leq t]}  \bigg\{\bigg(  e^{-\rho x} \nbE \bigg[\bigg| <s, \frac{N^o(t-x)}{e^{\rho (t-x)}}>\bigg|\bigg]+  e^{-\rho x} \nbE[ |<s, Wv>|] \bigg)\wedge 2\bigg\}  \lambda(x)\nonumber\\
&~~~= \nbu_{[0\leq x \leq t]}  
 \bigg\{\bigg( e^{-\rho x} \nbE \bigg[  <-s, \frac{N^o(t-x)}{e^{\rho (t-x)}}>\bigg]+ e^{-\rho x} \nbE[ <-s, Wv>] \bigg)\wedge 2\bigg\} \lambda(x) \nonumber\\
 &~~~\leq  \nbu_{[0\leq x \leq t]} 
  \bigg\{ \bigg(   e^{-\rho x} \kappa \  \nbE \bigg[ <u, \frac{N^o(t-x)}{e^{\rho (t-x)}}>\bigg]+   e^{-\rho x} \nbE[<-s, Wv>]\bigg)\wedge 2\bigg\} \lambda(x),
\end{align}
where the first equality is due to the fact that $s_i \leq 0$ for $i=1,2,...,k$ and $\frac{N^o(t-x)}{e^{\rho (t-x)}}$ and $Wv$ have nonnegative entries, and the last inequality is due to (\ref{sj}). The first expectation in (\ref{Q1hinteg}) is essentially $\nbE [ <u, N^o(0)/e^{\rho \times 0}>]$ because of the martingale property and in turn, it is equal to $<u, \mathbf{n}_0>=u_1$ because of $N^o(0)=\mathbf{n}_0$. And the second expectation is some finite constant. Therefore we conclude that
(\ref{Q1hinteg}) is bounded as, for some constants $K>0$ and $K^\ast>0$, 
\begin{align*}
&\nbu_{[0\leq x \leq t]} 
  \bigg\{ \bigg( e^{-\rho x} \kappa  \nbE \bigg[ <u, \frac{N^o(t-x)}{e^{\rho (t-x)}}>\bigg]+ e^{-\rho x} \nbE[<-s, Wv>] \bigg) \wedge 2\bigg\} \lambda(x)\\
  &~~~= \nbu_{[0\leq x \leq t]} [(\kappa \ u_1e^{-\rho x}+Ke^{-\rho x}) \wedge 2]\lambda(x)  \leq K^\ast e^{-\rho x} \lambda(x):=f(x).
\end{align*}
Then, it is now shown that the integrand in (\ref{Q1ha}) tends to 0 as $t\rightarrow +\infty$ for a fixed $x$ and is dominated by the function $x \mapsto f(x)$ which is integrable by assumption. Therefore, by the dominated convergence theorem we conclude that (\ref{Q1ha}) goes to 0 as $t\rightarrow \infty$, which implies that $Q_{1,t}$ in (\ref{Q1h}) verifies $\lim_{t\rightarrow \infty } Q_{1,t}=0$. 

{\bf Limit of $Q_{2,t}$ in (\ref{Q2h}) as $t\rightarrow +\infty$.} 
In order to prove that the integral $Q_{2,t}$ converges as $t\to \infty$, it suffices to show that $x\mapsto \left| \{ \nbE[\exp (<s, Wv e^{-\rho x}>)]-1 \} \lambda(x) \right|$ is upper bounded by some integrable function. Since $<s, Wv e^{-\rho x}>\leq 0$ for $  s \in (-\infty,0]^k$ with the help of (\ref{A}), the following inequality holds:
\[
\big|\exp (<s, Wv e^{-\rho x }>)-1\big| \leq \big|<s, Wv e^{-\rho x}>\big| = e^{-\rho x} <-s, Wv>.
\]
We then arrive at the following bound
$$
\left| \{ \nbE[\exp (<s, Wv e^{-\rho x}>)]-1 \} \lambda(x)\right|\le  e^{-\rho x} \lambda(x) \nbE \left[  <-s, Wv>\right] 
$$
which indeed is integrable by the integrability assumption for $x\mapsto e^{-\rho x} \lambda(x)$.
Combining the above results, the limit of $Q_{2,t}$ in (\ref{Q2h}) is obtained as
\[
Q_{2,t}\longrightarrow \int^{+\infty}_0   \{\varphi_{Wv} (se^{-\rho x})-1 \}\lambda(x)\;dx,\qquad t\rightarrow \infty .
\]
We conclude thus that the limit of $Q_t $ in (\ref{limith}) is given by the right-hand side in the above limit,
and in turn, that (\ref{p1}) is proved. Consequently, this completes the proof.

\subsection{Proof of Theorem \ref{T1bis}}\label{sec:subcritical}
{\it Proving (i) the limit of $N(t)$ in \eqref{subcritical_gamma_zero} as $t\to +\infty$.} First, we recall that $\rho<0$ and $\delta=0$, and the
intensity satisfies $\lim_{t\infty}\lambda(t)=\lambda_\infty$. We proceeed to prove straightforwardly that $\int^{t}_0 [\varphi^o_x (s)-1]\lambda(t-x)  dx$ in Lemma \ref{lemLTZt} converges to $\lambda_\infty\int^{\infty}_0 [\varphi^o_x (s)-1]  dx$ as $t\to +\infty$ by a dominated convergence argument, so that the LT of $N(t)$ in \eqref{LTZt} converges to  $\exp \big\{ \lambda_\infty\int^{+\infty}_0 [\varphi^o_x(s)-1]  dx\big\}$, which from e.g. \cite[Theorem 2]{K51} is the LT of some distribution $\nu$ with support in $\nbR^k_+$. We start by writing
\begin{equation}\label{gamma_zero}
\int^{t}_0 [\varphi^o_x (s)-1]\lambda(t-x)  dx = \int^{\infty}_0 \nbu_{[0\le x\le t]}[\varphi^o_x (s)-1]\lambda(t-x)  dx .
\end{equation}
Using the inequality in (\ref{sj}), one finds
\[
|<s,N^o(x)>|=<-s,N^o(x)> \leq  \kappa <u, N^o(x)>,
\]
where we recall that $\kappa=\max_{j=1,...,k} -s_j/u_j$ for example. Then we get that
\begin{align}
&\nbu_{[0\le x\le t]} |\varphi^o_x(s)-1|\lambda(t-x)= \nbu_{[0\le x\le t]}|\nbE[e^{<s, N^o(x)>}]-1| \lambda(t-x) \nonumber\\
&~~~ \leq \nbu_{[0\le x\le t]}\nbE[ |e^{<s,N^o(x)>}-1|] \lambda(t-x) \leq \nbu_{[0\le x\le t]} \nbE[ |<s,N^o(x)>|] \lambda(t-x)\nonumber\\
& ~~~ \leq  C_\lambda \kappa\ \nbu_{[0\le x\le t]}\nbE[<u, N^o(x)>] = C_\lambda \kappa \nbu_{[0\le x\le t]} e^{\rho x} \nbE[<u, N^o(x) e^{-\rho x}>] \nonumber\\
&~~~= C_\lambda \kappa \nbu_{[0\le x\le t]} e^{\rho x}  \nbE[<u, N^o(0)>]=C_\lambda \kappa e^{\rho x} \nbE[<u,\mathbf{n}_0>]= C_\lambda \kappa u_1e^{\rho x},\label{ineq_critical}
\end{align} 
where $C_\lambda =\sup_{y\ge 0} \lambda(y)$ is a finite quantity since $\lambda(y)$ converges as $y\to +\infty$. Since $\rho<0$, the right-hand side of \eqref{ineq_critical} is integrable, and $\lim_{t\to +\infty} \nbu_{[0\le x\le t]}[\varphi^o_x (s)-1]\lambda(t-x)=[\varphi^o_x (s)-1]\lambda_\infty$, the dominated convergence theorem entails that \eqref{gamma_zero} indeed converges to $\lambda_\infty\int^{\infty}_0 [\varphi^o_x (s)-1]  dx$ as $t\to +\infty$.

{\it Proving (ii) the limit of $e^{-\delta t}N(t)$ in \eqref{subcritical_gamma_positive} as $t\to +\infty$.} We assume here that $\rho\le 0$ and $\delta>0$ or that $\rho> 0$ and $\delta>\rho$. First, the LT of  $e^{-\delta t}N(t)$ is given as $e^{R_t}$ with $R_t$ which can be written as $R_{1,t} + R_{2,t}$ as below:
\begin{eqnarray}
R_t&=& \int^{t}_0 [\varphi^o_{t-x} (se^{-\delta t})-1] \lambda(x)\; dx = \int^{t}_0  \{ \nbE[\exp (<se^{-\delta t}, N^o(t-x)>)]-1 \} \lambda(x) \; dx \label{dec_Qt_delta}\\
&=&R_{1,t}+R_{2,t}, \nonumber\\
R_{1,t}&:=& \int_0^\infty \nbu_{[0\le x\le t]}\nbE\left[ \exp\left( <se^{-\delta t}, N^o(t-x)>\right)- 1 - <se^{-\delta t}, N^o(t-x)>\right]\lambda(x)\; dx,\label{def_R1t}\\
R_{2,t}&:=& \int_0^\infty \nbu_{[0\le x\le t]}\nbE\left[ <se^{-\delta t}, N^o(t-x)>\right]\lambda(x)\; dx .\label{def_R2t}
\end{eqnarray}
Similar to \eqref{def_Qt}, we then study the limits of $R_{1,t}$ and $R_{2,t}$ separately as $t\to +\infty$.\\
{\bf Limit of $R_{1,t}$ in \eqref{def_R1t} as $t\to +\infty$. }It will be shown that $R_{1,t}$ tends to $0$ as $t\to +\infty$. The finite increment formula applied to $\psi(x):=e^x-1-x$ as well as the inequality $|e^u-1|\le |u|$ for $u\le 0$ implies that, for all $x\le 0$,
$$|\psi(x)-\psi(0)|=|e^x-1-x|\le \sup_{u\in [x,0]} |\psi'(u)| . |x| = \sup_{u\in [x,0]} |e^u-1| . |x|\le \sup_{u\in [x,0]} |u| . |x|=|x|^2.$$
The above result, combined with the Cauchy-Schwarz inequality, yields the upper bound for $|R_{1,t}|$ given by
\begin{multline}\label{ineg_R1t}
|R_{1,t}| \le \int_0^\infty \nbu_{[0\le x\le t]}\nbE\left[ |<se^{-\delta t}, N^o(t-x)>|^2\right]\lambda(x)\; dx\\
\le ||s||^2\int_0^\infty \nbu_{[0\le x\le t]}e^{-2\delta t}\nbE\left[ ||N^o(t-x)||^2\right]\lambda(x)\; dx.
\end{multline}
We here separate the cases $\rho<0$, $\rho=0$ and $\rho>0$, the last case requires the additional constraint $\delta>\rho$. If $\rho<0$, \cite[Limit (19) p.204]{A72} implies that $\nbE\left[ ||N^o(t-x)||^2\right]\le C e^{\rho(t-x)}$ for some constant $C>0$. Also, the assumption $\lambda(x)\sim_{x\to\infty} \lambda_\infty e^{\delta x}$ in particular implies that $\lambda(x)$ is bounded by $e^{\delta x}$ up to a constant, hence one gets from \eqref{ineg_R1t} that for some (different) constant $C>0$,
\begin{multline*}
|R_{1,t}| \le C \int_0^\infty \nbu_{[0\le x\le t]}e^{-2\delta t} e^{\rho(t-x)}e^{\delta x} dx = C e^{(-2\delta+\rho)t} \int_0^t e^{(-\rho+\delta)x}dx\\
= \frac{C}{-\rho + \delta}[e^{-\delta t}- e^{(-2\delta+\rho)t}]\longrightarrow 0\quad \mbox{as } t\to +\infty .
\end{multline*}
If $\rho=0$ then \cite[Limit (20) p.204]{A72} implies that $\nbE\left[ ||N^o(t-x)||^2\right]$ is less than $t-x$ up to a constant, hence for some constant $C>0$ we have
\begin{multline*}
|R_{1,t}| \le C \int_0^\infty \nbu_{[0\le x\le t]}e^{-2\delta t} (t-x) e^{\delta x} dx\le t \int_0^t e^{-2\delta t} e^{\delta x} dx \\
= \frac{t}{\delta} [e^{-\delta t}-e^{-2\delta t}]\longrightarrow 0\quad \mbox{as } t\to +\infty .
\end{multline*}
Finally, if $\rho>0$ then \cite[Limit (21) p.204]{A72} implies that $\nbE\left[ ||N^o(t-x)||^2\right]$ is less than $e^{2\rho (t-x)}$, hence for some constant $C>0$ we have
\begin{equation}\label{R1t_rho_positive}
|R_{1,t}| \le C \int_0^\infty \nbu_{[0\le x\le t]}e^{-2\delta t} e^{2\rho(t-x)}e^{\delta x} dx = C e^{2(-\delta+\rho)t} \int_0^t e^{(-2\rho+\delta)x}dx
\end{equation}
which one can show tends to $0$ as $t\to +\infty$, thanks to $\delta>\rho$.\\
{\bf Limit of $R_{2,t}$ in \eqref{def_R2t} as $t\to+\infty$, and conclusion.} From \cite[p.202]{A72}, we know that the mean matrix of the multitype process $N^o(z)$ is expressed as $\nbE[N^o(z)]=e^{Az}\mathbf{n}_0$ where the matrix $A$ is defined in (\ref{matrixA}) and $\mathbf{n}_0=(1,0,...,0)'$. Therefore, $R_{2,t}$ can be expressed, after some manipulation, as
\begin{eqnarray}
R_{2,t} & = & \int_0^t e^{-\delta t}<s, e^{A(t-x)}\mathbf{n}_0> \lambda(x) dx \nonumber\\
&=& \int_0^\infty \nbu_{[0\le x\le t]} e^{-\delta t}<s, e^{Ax}\mathbf{n}_0> \lambda(t-x) dx \label{expression_R2t}
\end{eqnarray}
We now wish to use the dominated convergence theorem in order to find the limit in \eqref{expression_R2t}. Upper bounding $\lambda(x)$ by $C e^{\delta x}$ for some constant $C>0$ results in
$$
 \nbu_{[0\le x\le t]} e^{-\delta t}<s, e^{Ax}\mathbf{n}_0> \lambda(t-x) \le C<s, e^{(A-\delta I)x}\mathbf{n}_0>
$$
which is integrable because $\delta>\rho$ (in either case $\rho\le 0$ or $\rho>0$) so that all eigenvalues of $A-\delta I$ have negative real parts. Also, the assumption that $\lambda(x)\sim_{x\to\infty} \lambda_\infty e^{\delta x}$ results in $\nbu_{[0\le x\le t]} e^{-\delta t}<s, e^{Ax}\mathbf{n}_0> \lambda(t-x)\longrightarrow \lambda_\infty <s, e^{(A-\delta I)x}\mathbf{n}_0>$ as $t\to +\infty$, for all $x\ge 0$. Hence we deduce from \eqref{expression_R2t} that
$$
R_{2,t}\longrightarrow \lambda_\infty \int_0^\infty <s, e^{(A-\delta I)x}\; \mathbf{n}_0> dx= <s, \lambda_\infty \int_0^\infty e^{(A-\delta I)x} dx \; \mathbf{n}_0>=<s, \lambda_\infty (\delta I-A)^{-1}\mathbf{n}_0>.
$$
Since $R_{1,t}\longrightarrow 0$ as $t\to +\infty$, one arrives at the convergence of the LT of $e^{-\delta t}N(t)$ to $\exp(<s, \lambda_\infty (\delta I-A)^{-1}\mathbf{n}_0>)$, so that $e^{-\delta t}N(t)$ converges in distribution (or, equivalently, in probability) towards $\lambda_\infty (\delta I-A)^{-1}\mathbf{n}_0$. Hence (ii) in \eqref{subcritical_gamma_positive} is proved.

{\it Proving (iii) the limit of $e^{-\delta t} \frac{N(t)}{t}$ in \eqref{supercritical_gamma_equal} as $t\to +\infty$.} We assume the supercritical case $\rho>0$ and $\rho=\delta$. In this case, instead of \eqref{dec_Qt_delta}, we consider the quantity $R_t:=\int^{t}_0 [\varphi^o_{t-x} (se^{-\delta t}/t)-1] \lambda(x)\; dx=\int^{t}_0  \{ \nbE[\exp (<se^{-\delta t}/t, N^o(t-x)>)]-1 \} \lambda(x) \; dx$ such that the LT of $e^{-\delta t} \frac{N(t)}{t}$  is equal to $e^{R_t}$, which is similarly decomposed as in \eqref{def_R1t} and \eqref{def_R2t} as $R_t=R_{1,t}+R_{2,t}$
\begin{eqnarray}
R_{1,t}&:=& \int_0^\infty \nbu_{[0\le x\le t]}\nbE\left[ \exp\left( <se^{-\delta t}/t, N^o(t-x)>\right)- 1 - <se^{-\delta t}/t, N^o(t-x)>\right]\lambda(x)\; dx,\label{def_R1t_equal}\\
R_{2,t}&:=& \int_0^\infty \nbu_{[0\le x\le t]}\nbE\left[ <se^{-\delta t}/t, N^o(t-x)>\right]\lambda(x)\; dx .\label{def_R2t_equal}
\end{eqnarray}
Utilizing the inequality in \eqref{R1t_rho_positive}, one obtains, for some constant $C>0$,
$$
|R_{1,t}| \le \frac{1}{t^2} C \int_0^\infty \nbu_{[0\le x\le t]}e^{-2\delta t} e^{2\rho(t-x)}e^{\delta x} dx = C  \frac{1}{t^2}  \int_0^t e^{-\delta x}dx\longrightarrow 0,
$$
thus \eqref{def_R1t_equal} tends to $0$ as $t\to +\infty$. Turning to \eqref{def_R2t_equal}, we write, using the expression $\nbE[N^o(z)]=e^{Az}\mathbf{n}_0$ as in \eqref{expression_R2t} and perform the change of variable $z:=1-x/t$ together with the assumption $\delta=\rho$:
\begin{eqnarray}
R_{2,t} & = & \frac{1}{t}\int_0^t e^{-\delta t}<s, e^{A(t-x)}\mathbf{n}_0> \lambda(x) dx \nonumber\\
&=& <s, \int_0^1 e^{A t(1-z)}e^{-\rho t} \lambda(tz) dz \; \mathbf{n}_0>\nonumber\\
&=& <s, \int_0^1 e^{(A- \rho I) t(1-z)}e^{-\rho t z} \lambda(tz) dz \; \mathbf{n}_0>.\label{compute_R2t_equal}
\end{eqnarray}
We now wish to investigate \eqref{compute_R2t_equal} when $t\to +\infty$. Since $A$ is regular, the Perron Frobenius theory entails that $e^{(A-\rho I)x}$ converges to $uv'$ as $x\to +\infty$, see e.g. \cite[Limit (17) p.203]{A72}. Hence, for all $z\in (0,1)$ one has $\lim_{t\to \infty}e^{(A-\rho I) t(1-z)}=uv'$. Also, the assumption $\lambda(x)\sim_{x\to\infty} \lambda_\infty e^{\delta x}$ with $\delta=\rho$ implies that $\lim_{t\to \infty} e^{-\rho t z} \lambda(tz) =\lambda_\infty$ for all $z\in (0,1)$, so that by the dominated convergence theorem we may let $t\to +\infty$ in \eqref{compute_R2t_equal} and obtain
$$
R_{2,t}\longrightarrow <s, \lambda_\infty uv'\; \mathbf{n}_0>=<s, \lambda_\infty v_1 u>,\quad t\to +\infty.
$$
Therefore, since $R_t=R_{1,t}+R_{2,t}$ with $\lim_{t\to \infty}R_{1,t}=0$, one concludes the convergence \eqref{supercritical_gamma_equal}.

 \subsection{Proof of Theorem \ref{T2} in the critical case $\rho=0$}\label{sec:critical}

Again, we begin from Lemma \ref{lemLTZt}, from which we deduce that the LT of $N(t)/t$ admits the expression
\begin{align}\label{LTNtt}
\nbE\bigg[\exp \bigg(<s, \frac{N(t)}{t}>\bigg)\bigg] = \nbE[\exp (<t^{-1}s, N(t)>)]= \exp \bigg\{ \int^t_0  [ \varphi^o_{t-y}(t^{-1}s)-1]\lambda(y) dy \bigg\}.
\end{align}
We thus study
\begin{align}\label{criti}
\int^t_0  [ \varphi_{t-y} (t^{-1}s )-1 ] \lambda(y) dy 
&= \int^t_0   \nbE\bigg[\exp \bigg(<s, \frac{N^o(t-y)}{t}>\bigg)-1\bigg] \lambda(y) dy \nonumber\\
&= \int^{\Lambda(t)}_0   \nbE\bigg[\exp \bigg(<s, \frac{N^o(t-\Lambda^{-1}(y))}{t}>\bigg)-1\bigg]  dy \nonumber\\
&=  \int^1_0 \Lambda(t)\; \nbE\bigg[\exp \bigg(<s, \frac{N^o( t - \Lambda^{-1} (\Lambda(t) x))}{t}>\bigg)-1\bigg] dx\nonumber\\
&:=-\int^1_0 \gamma_t(x) dx,
\end{align}
where $\Lambda^{-1}(.)$ is the inverse of the function $\Lambda(.)$ (invertible as it is assumed that $\lambda(t)>0$ for all $t\geq 0$), the second last equality is due to a change of variable with $x:=y/\Lambda(t)$ and $\gamma_t(x)$ is given by
\begin{equation}\label{gammat}
\gamma_t(x):= \Lambda(t)\; \nbE\bigg[1-\exp \bigg(<s, \frac{N^o( t - \Lambda^{-1} (\Lambda(t) x))}{t}>\bigg)\bigg].
\end{equation}
We note that the assumption $\lim_{t\to +\infty}\Lambda(t)/t=\lambda_\infty$ implies that $\lim_{t\to +\infty}\Lambda^{-1}(t)/t=\lambda_\infty^{-1}$, which is in turn equivalent to 
\begin{equation}\label{Laminv}
\Lambda^{-1}(t)\sim \lambda_\infty^{-1} t, \qquad \mathrm{i.e.}\qquad \Lambda^{-1}(t)=\lambda_\infty^{-1} t + \eta(t) t,
\end{equation}
where $\lim_{t\to \infty}\eta(t)=0$.

In the following, we shall prove by the dominated convergence theorem that the right-hand side of (\ref{criti}) has the following limit
\begin{equation}\label{limit_to_prove}
-\int^1_0 \gamma_t(x) dx \longrightarrow \lambda_\infty \beta \int_0^\infty \nbE [\exp (<s, e^{-y} {\cal X}>)-1 ]dy, \quad t\to +\infty,
\end{equation}
where $\beta$ is given by \eqref{def_beta}. Here ${\cal X}=\chi v\otimes  \mu^{-1} \in [0,+\infty)^k$ where $\chi \sim {\cal E}(c) $ for $c>0$ given by \eqref{def_c} and the survival function of ${\cal X}$ is given by 
\begin{equation}\label{distribution_X}
\PP \left( {\cal X} > z\right)=\exp\left( - c \max_{i=1,...,k} \frac{z_i}{v_i  \mu_i^{-1} }\right),\quad z=(z_1,...,z_k)\in [0,+\infty)^k .
\end{equation}
 The proof is decomposed in the following steps.

{\bf Step 1: Dominating the integrand in \eqref{criti}.} First, since $s=(s_1,...,s_k) \in (-\infty, 0]^k$ has negative entries, we have for all $t\geq 0$ and $x\in(0,1)$ that
\[
0 \leq 1-\exp \bigg(<s, \frac{N^o( t - \Lambda^{-1} (\Lambda(t) x))}{t}>\bigg) \leq \, <-s, \frac{N^o( t - \Lambda^{-1} (\Lambda(t) x))}{t}>,
\]
where the last inequality is due to the fact that $1- e^{-x} \leq x$ for all $x\geq 0$. Using (\ref{sj}) again, one finds $<-s, \frac{N^o(t - \Lambda^{-1} (\Lambda(t) x) )}{t}> \leq \kappa \big(<u, \frac{N^o(t - \Lambda^{-1} (\Lambda(t) x) )}{t}>\big)$. Hence, taking the expectation and multiplying by $\Lambda(t)$ on both sides result in 
\begin{align}
0 & \leq  \gamma_t(x) \leq \Lambda(t) \kappa\ \nbE \bigg[  <u,  \frac{N^o( t - \Lambda^{-1} (\Lambda(t) x))}{t}  >\bigg]
 \le C_\lambda \kappa \ \nbE [<u, N^o(t - \Lambda^{-1} (\Lambda(t) x))>] \nonumber\\
 =& C_\lambda \kappa\ \nbE[<u,N^o(0)>]=C_\lambda\kappa\ \nbE[<u,\mathbf{n}_0>]=C_\lambda\kappa u_1, \label{domination_gamma_t_x}
\end{align}
where the first equality is obtained by the martingale argument and $C_\lambda:=\sup_{t\ge 0}\Lambda(t)/t <+\infty$. Since $C_\lambda$ and $\kappa$ are constants (independent of $t$ and $x$), the integrand in (\ref{criti}) is dominated by some constant independent from $t\ge 0$ and $x\in (0,1)$. 

{\bf Step 2: Almost sure limit of the integrand in \eqref{criti}.} Second, let us now prove the following convergence for (\ref{gammat}):
\begin{equation}\label{critiint} 
\gamma_t(x) \longrightarrow  \gamma(x):= \lambda_\infty \frac{\beta}{1-x} \nbE [1-\exp  (<s, (1-x)\;  {\cal X}> )  ],
\end{equation}
for a fixed $x\in (0,1)$ and $s=(s_1,...,s_k)\in (-\infty ,0)^k$ as $t\to +\infty$. First, using that for all $s_j <0$, $-e^{s_jx}=\int^\infty_x s_je^{s_j y}dy$, one finds $-\exp(s_j N^o_j(t - \Lambda^{-1} (\Lambda(t) x))/t)=\int_{\nbR_+^*} s_j \exp(s_j z_j)\nbu_{[z_j \ge N^o_j(t - \Lambda^{-1} (\Lambda(t) x))/t]} dz_j$ for $j=1,...,k$. Together with Fubini's theorem, we get that
\begin{equation}\label{critical_demo1}
\exp \bigg(<s, \frac{N^o(t - \Lambda^{-1} (\Lambda(t) x))}{t}>\bigg)=(-1)^k\int_{\nbR_+^{*k}} \prod_{j=1}^k \left[s_j \exp(s_j z_j)\nbu_{[z_j \ge N^o_j(t - \Lambda^{-1} (\Lambda(t) x))/t]}\right] dz,
\end{equation}
where $dz=dz_1 \cdots dz_k$, $z=(z_1,...,z_k)\in\nbR_+^{*k}$. 
Since it is necessary to have the integral $\int_{\nbR_+^{*k}} \prod_{j=1}^k [s_j \exp(s_j z_j)] dz$ convergent later, we consider the case when $s_j<0$ for all $j=1,...,k$. However, it is not hard to check that the proof can be also accommodated the case when one of the $s_j$'s is zero.
The results in \cite{W70} will be repeatedly used in the following leading to the convergence (\ref{critiint}). 
By an expansion formula, one has
\begin{multline*}
\prod_{j=1}^k \nbu_{[z_j \ge N^o_j(t - \Lambda^{-1} (\Lambda(t) x))/t]}= \prod_{j=1}^k \left[1- \nbu_{[z_j < N^o_j(t - \Lambda^{-1} (\Lambda(t) x))/t]}\right]
\\ = 1+ \sum_{J\subset \{1,...,k\}} (-1)^{\mbox{card}(J)}\prod_{j\in J} \nbu_{[z_j < N^o_j(t - \Lambda^{-1} (\Lambda(t) x))/t]},
\end{multline*}
where $\sum_{J\subset \{1,...,k\}}$ is the sum over nonempty sets $J\subset \{1,...,k\}$. Plugging the above expression into \eqref{critical_demo1}, it follows that $\gamma_t(x)$ in \eqref{gammat} may be expressed as 
\begin{eqnarray}
\gamma_t(x)&= &\sum_{J\subset \{1,...,k\}} (-1)^{\mbox{card}(J)+k+1} \nonumber\\
&& \times \int_{\nbR_+^{*k}} \prod_{j=1}^k [s_j \exp(s_j z_j)] \; \Lambda(t)\; \nbP\bigg(\frac{N^o_j(   t - \Lambda^{-1} (\Lambda(t) x)  )}{t}>z_j,\ \forall j\in J\bigg)\; dz .\label{critiint1}
\end{eqnarray}
To find a simpler expression, we define for all $J\subset \{1,...,k\}$ and $z=(z_1,...,z_k)\in \nbR_+^{*k}$ the vector $z^J$ of which the $j$th entry $z^J_j$ is $z_j$ if $j\in J$, and some arbitrary negative value (e.g. $-1$) otherwise. With this, we can drive a more compact form of \eqref{critiint1} given by
\begin{eqnarray}
\gamma_t(x)&= &\sum_{J\subset \{1,...,k\}} (-1)^{\mbox{card}(J)+k+1} \nonumber\\
&&  \times\int_{\nbR_+^{*k}} \prod_{j=1}^k [s_j \exp(s_j z_j)] \; \Lambda(t)\; \nbP\bigg(\frac{N^o(   t - \Lambda^{-1} (\Lambda(t) x)  )}{t}>z^J \bigg)\; dz .\label{critiint2}
\end{eqnarray}
where, for two vectors $v_1$ and $v_2$, $v_1>v_2$ means that each entry of $v_1$ is larger than the corresponding one in $v_2$.

Next, let us observe that, for a fixed $x\in (0,1)$, from (\ref{Laminv}) it follows that
$
 t - \Lambda^{-1} (\Lambda(t) x) =t - \lambda_\infty^{-1} \Lambda(t) x - \eta ( \Lambda(t) x) \Lambda(t) x =t - \lambda_\infty^{-1} \Lambda(t) x + o(t)
$
and also $\lambda_\infty^{-1} \Lambda(t) x= \lambda_\infty^{-1} [\lambda_\infty t + o(t)] x = tx + o(t)$ due to $\lim_{t\to +\infty}\Lambda(t)/t=\lambda_\infty$. Thus, one finds that
\begin{equation}\label{equiv_lambda}
 t - \Lambda^{-1} (\Lambda(t) x) \sim t(1-x),\quad t\to+\infty .
\end{equation}
Since the above result entails that $t - \Lambda^{-1} (\Lambda(t) x)\longrightarrow +\infty$ as $t\to +\infty$, from \cite[Theorems 1 and 5]{W70}, we find for $x\in(0,1)$ that
\begin{eqnarray}
[t - \Lambda^{-1} (\Lambda(t) x)] \; \nbP(N^o(t - \Lambda^{-1} (\Lambda(t) x))>0)&\longrightarrow & \beta , \label{conv_critical_tool1}\\
\nbP\left(\left. \frac{N^o( t - \Lambda^{-1} (\Lambda(t) x) )}{t - \Lambda^{-1} (\Lambda(t) x)}> z \right| N^o(t - \Lambda^{-1} (\Lambda(t) x) )>0 \right)& \longrightarrow & \exp\left( -c \max_{i=1,...,k} \frac{z_i}{v_i  \mu_i^{-1} }\right) \nonumber \\
&& =\nbP({\cal X}>z)\label{conv_critical_tool2}
\end{eqnarray}
as $t\to +\infty$ and for all $z=(z_1,...,z_k)\in {\nbR_+^{*k}}$, where we recall that $\cal X$ has a distribution given by \eqref{distribution_X}. Here again, the relation '$>$' is understood entrywise. It is noted that \eqref{conv_critical_tool2} simply states that the distribution of $\frac{N^o( t - \Lambda^{-1} (\Lambda(t) x) )}{t - \Lambda^{-1} (\Lambda(t) x)}$ given that $N^o(t - \Lambda^{-1} (\Lambda(t) x) )>0$ converges to the distribution of ${\cal X}$. Also, since $z\in \nbR^k \mapsto \nbP({\cal X}>z)$ is continuous (extending the definition in \eqref{distribution_X} from $z\in \nbR_+^{*k}$ to $z\in \nbR^k$ by putting $\nbP({\cal X}>z)=1$ if $\max_{i=1,...,k}z_i \le 0$), and $\lim_{t\to +\infty}\frac{t}{t - \Lambda^{-1} (\Lambda(t) x)} = \frac{1}{1-x}$ from (\ref{equiv_lambda}), one has from Lemma \ref{lem_appendixA} (See Appendix A) for all $z$ that
\begin{multline*}
\nbP\bigg(\left. \frac{N^o(   t - \Lambda^{-1} (\Lambda(t) x)  )}{t}>z \right| N^o(   t - \Lambda^{-1} (\Lambda(t) x)  )>0 \bigg)\\
=\nbP\bigg(\left. \frac{N^o(   t - \Lambda^{-1} (\Lambda(t) x)  )}{t - \Lambda^{-1} (\Lambda(t) x)}> \frac{t}{t - \Lambda^{-1} (\Lambda(t) x)}z \right| N^o(   t - \Lambda^{-1} (\Lambda(t) x)  )>0 \bigg)\\
\longrightarrow \nbP \left( {\cal X} > \frac{1}{1-x} z\right) = \nbP((1-x){\cal X}>z),\quad t\to +\infty ,
\end{multline*}
for a fixed $x\in (0,1)$. This latter convergence along with \eqref{equiv_lambda} and \eqref{conv_critical_tool1} entails that the components of the integrand in (\ref{critiint2}) satisfies
\begin{multline}
\Lambda(t)\; \nbP\bigg(\frac{N^o(   t - \Lambda^{-1} (\Lambda(t) x)  )}{t}>z^J \bigg)\\
= \frac{\Lambda(t)}{ t - \Lambda^{-1} (\Lambda(t) x)} \nbP\bigg(\left. \frac{N^o(   t - \Lambda^{-1} (\Lambda(t) x)  )}{t}>z^J \right| N^o(   t - \Lambda^{-1} (\Lambda(t) x)  )>0 \bigg) \\
\times [t - \Lambda^{-1} (\Lambda(t) x)] \; \nbP(N^o(t - \Lambda^{-1} (\Lambda(t) x))>0)\\
\longrightarrow \frac{\lambda_\infty}{1-x} \beta\; \nbP((1-x)\; {\cal X}>z^J),\quad t\to +\infty , \quad \forall J\subset \{1,...,k\}.\nonumber
\end{multline}
It is important to note that from the convergence result in \eqref{conv_critical_tool1}, $[t - \Lambda^{-1} (\Lambda(t) x)] \; \nbP(N^o(t - \Lambda^{-1} (\Lambda(t) x))>0)$ is bounded uniformly in $t\ge 0$ and $x\in (0,1)$ by some constant. Also, $\frac{\Lambda(t)}{ t - \Lambda^{-1} (\Lambda(t) x)}$ is upper bounded in $t\ge 0$ by some constant that depends on $x$ as it is convergent towards $\frac{\lambda_\infty}{1-x}$ as $t\to +\infty$. Therefore, the following function is bounded by 
$$
\Lambda(t)\; \nbP\bigg(\frac{N^o(   t - \Lambda^{-1} (\Lambda(t) x)  )}{t}>z^J \bigg) \le  K_x, \quad \forall J\subset \{1,...,k\},\quad \forall t\ge 0,
$$
where $K_x$ is some constant independent from $t\ge 0$ and $z\in \nbR_+^{*k}$.
Since the integral $\int_{\nbR_+^{*k}} \prod_{j=1}^k [s_j \exp(s_j z_j)] dz$ is finite for  fixed $s=(s_1,...,s_k)\in (-\infty ,0)^k$, one finds by the dominated convergence theorem that the integrand in (\ref{critiint2}) satisfies
\begin{multline*}
\int_{\nbR_+^{*k}} \prod_{j=1}^k [s_j \exp(s_j z_j)] \;       
 \Lambda(t)\; \nbP\bigg(\frac{N^o(   t - \Lambda^{-1} (\Lambda(t) x)  )}{t}>z^J \bigg)
\; dz\\
\longrightarrow \int_{\nbR_+^{*k}} \prod_{j=1}^k [s_j \exp(s_j z_j)] \bigg\{\frac{\lambda_\infty}{1-x} \beta\; \nbP((1-x)\; {\cal X}>z^J)\bigg\} dz ,\quad t\to+\infty , \quad \forall J\subset \{1,...,k\}
\end{multline*}
for a fixed $x\in (0,1)$. Putting this into \eqref{critiint2} yields that (\ref{gammat}) converges to
$$\gamma_t(x) \longrightarrow \sum_{J\subset \{1,...,k\}} (-1)^{\mbox{card}(J)+k+1} \int_{\nbR_+^{*k}} \prod_{j=1}^k [s_j \exp(s_j z_j)] \bigg\{\frac{\lambda_\infty}{1-x} \beta\; \nbP((1-x)\; {\cal X}>z^J)\bigg\}dz,\quad t\to +\infty $$
for a fixed $x \in (0,1)$. By applying the argument leading to the expression \eqref{critiint2} for $\gamma_t(x)$ in (\ref{gammat}), it can be shown that the right-hand side of the above convergence is $\gamma(x)$ in (\ref{critiint}). Thus, \eqref{critiint} is proved.

{\bf Step 3: Proof of \eqref{limit_to_prove}.} Thanks to \eqref{domination_gamma_t_x} and \eqref{critiint}, by the dominated convergence theorem, one thus deduces that \eqref{criti} converges as $t\to +\infty$ to
\[
-\int_0^1 \gamma(x) dx= -\int_0^1 \frac{\lambda_\infty \beta}{1-x} \nbE[1-\exp (<s, (1-x)\;  {\cal X}>)  ] dx, 
\]
which results in \eqref{limit_to_prove} after changing a variable $y:=-\ln (1-x)$.

{\bf Step 4: End of proof.} From (\ref{LTNtt}) with the convergence results of (\ref{criti}) towards \eqref{limit_to_prove}, one finds that
\begin{equation}\label{asyLTNtt}
\nbE\bigg[\exp \bigg(<s, \frac{N(t)}{t}>\bigg)\bigg] \longrightarrow
\exp\left( \lambda_\infty \beta \int_0^\infty \nbE[\exp (<s, e^{-y} {\cal X}>)-1  ]dy \right),\quad t\rightarrow +\infty,
\end{equation}
for $ s\in (-\infty,0]^k$, 
Since ${\cal X}=\chi v\otimes  \mu^{-1} $ with $\chi\sim {\cal E}(c)$, one computes  that $ \nbE [\exp  (<s, e^{-t} {\cal X}> )-1 ]= \nbE [\exp  (<s, v\otimes  \mu^{-1} > \chi e^{-t} )-1 ] =\frac{e^{-t} <s, v\otimes  \mu^{-1} >}{c- e^{-t} <s, v\otimes  \mu^{-1} >}$. In turn, changing of variable $z:=e^{-t} <s, v\otimes  \mu^{-1} >$ yields that the right-hand side of the above convergence is the LT equivalent to 
$\left(\frac{c}{c-<s, v\otimes  \mu^{-1} >} \right)^{\lambda_\infty \beta}$, which indeed is the LT of ${\cal Z} v\otimes  \mu^{-1} $ in \eqref{conv_distrib_critical}. This completes the proof.

\section{Immigration modelled by Generalized Polya process (GPP)}\label{sec:GPP}

As discussed in Section \ref{sec:intro}, the GPP became a well-known contagion model when the transition intensity in the non-homogeneous birth process is a linear function of the current state multiplied by a function of the current time. In this section, we now assume that the arrival process $\{S(t),\ t\ge 0 \}$ is the GPP (or a positive contagion model in \cite{B70, W10}), i.e. a particular case of self exciting counting process with intensity rate $\lambda(t)$ satisfying 
 \begin{equation}\label{lam}
\lambda(t)=[aS(t^-)+b]\lambda_t,\qquad a >0,\  b> 0,
 \end{equation}
for some underlying function $t\mapsto \lambda_t>0$. When $b=1$, this arrival process was referred to as Linear Extension of the Yule Process (LEYP) by \cite{L14}. Hence, the intensity increases linearly with the number of arrivals at time $t$, which explains why such a model could be appropriate for the situations where the arriving particles representing cells infected by rapidly expanding disease contaminate other cells in an organism modelled by certain network mechanism or where the occurrence of shocks causes  outages of interconnected lines in a power system as studied in \cite{QJS16}. Let us start by establishing the LT of $N(t)$ as obtained in Lemma \ref{lemLTZt} for the NHPP immigration. 

\begin{lemm}\label{lemLTZtGPP}\normalfont
When the new particle arrives according to the GPP with the intensity rate given in (\ref{lam}), the LT of $N(t)$ in (\ref{Nt}) admits the following expression
 \begin{equation}\label{LTNt2}
  \varphi_t(s)=\bigg\{1- \int^t_0 [\varphi^o_{t-y}(s)-1] a \lambda_y  e^{a \Lambda_y} dy \bigg\}^{-b/a},
 \end{equation}
 \begin{proof}
 It is known that the marginal distribution of $S(t)$ is expressed as a negative binomial distribution with $\Lambda_t=\int_0^t \lambda_y dy$ (e.g. \cite[Theorem 1(i)]{C14}) given by 
 \[
 p_t(n):=\nbP(S(t)=n)=\frac{\Gamma(b/a+n)}{\Gamma(b/a)n!}
 (1-e^{-a \Lambda_t})^n (e^{-a \Lambda_t})^{b/a},
 \]
where $\Gamma(z)=\int^\infty_0 x^{z-1}e^{-x}dx$ for $z>0$ is the gamma function, that is a negative binominal distribution $(r,p)$ where $r=b/a$ and $p=1-e^{-a \Lambda_t}$. Its probability generating function is $P_t(z)=\sum^\infty_{n=0} z^n p_t(n)=(\frac{1-p}{1-pz})^r$ where $|z|<p^{-1}$. 

Then, from \cite[Section 3.2]{LWX14}, the LT of $N(t)$ can be expressed as a compound Negative binomial distribution as
 \begin{equation}\label{LTNt}
 \varphi_t(s)=P_t(\widetilde{f}_t(s)),
 \end{equation}
 where the LT of the secondary distribution is given by
 \begin{equation}\label{LTsec}
 \widetilde{f}_t(s)=\int^t_0 q_t(y) \varphi^o_{t-y}(s)dy
 \end{equation}
 with 
 \begin{equation}\label{qty}
 q_t(y)=\frac{a \lambda_y  e^{a \Lambda_y}}{e^{a \Lambda_t}-1},\qquad 0\leq y\leq t.
 \end{equation}
 Since $P_t(z)=(\frac{1-p}{1-pz})^r$, (\ref{LTNt}) is obtained as
 \[
  \varphi_t(s)=\bigg( \frac{e^{-a \Lambda_t}}
  {1-(1-e^{-a \Lambda_t}) \widetilde{f}_t(s)}\bigg)^{b/a}=\Big[ e^{a  \Lambda_t}-(e^{a\Lambda_t}-1)\widetilde{f}_t(s)\Big]^{-b/a}.
 \]
 But $\int^t_0 a\lambda_y e^{a  \Lambda_y} dy=e^{a \Lambda_t}-1$, so that one finds from (\ref{LTsec}) and (\ref{qty}) that
 \begin{align*}
e^{a \Lambda_t}-(e^{a \Lambda_t}-1)\widetilde{f}_t(s)&=  \int^t_0 a \lambda_y  e^{a \Lambda_y} dy + 1 - \int^t_0 a \lambda_y  e^{a \Lambda_y} \varphi^o_{t-y} (s) dy\\
&=1+ \int^t_0 [1-\varphi^o_{t-y}(s)] a \lambda_y  e^{a  \Lambda_t} dy.
 \end{align*}
 That is,
 \begin{equation}\label{LTNt1}
  \varphi_t(s)=\bigg\{1+ \int^t_0 [1-\varphi^o_{t-y}(s)] a \lambda_y  e^{a \Lambda_y} dy \bigg\}^{-b/a},
 \end{equation}
or equivalently (\ref{LTNt2}).
 \end{proof}
\end{lemm}

Although the result in Lemma \ref{lemLTZtGPP} holds for a general function $t\mapsto \lambda_t$ in (\ref{lam}), we shall focus on the case when $\lambda_t=\lambda >0$ is constant in the following. In this case, $\{S(t),\ t\ge 0 \}$ is called a contagious Poisson process \cite{A80}, and (\ref{LTNt1}) is simplified as
 \begin{equation}\label{scGPP}
   \varphi_t(s)=\bigg\{1+ \int^t_0 [1-\varphi^o_{t-y}(s)] a \lambda  e^{a \lambda y} dy \bigg\}^{-b/a}.
 \end{equation}
In the case of a constant baseline intensity $\lambda_t=\lambda$, taking the expectation on both sides of \eqref{lam} yields $\nbE[\lambda(t)]=a\lambda \nbE[S(t^-)] + \lambda b$. Since $\nbE[S(t^-)]=\nbE[S(t)]$ and $\{S(t)-\int_0^t \lambda(s)ds,\ t\ge 0\}$ is a martingale, we arrive at $\nbE[\lambda(t)]=a\lambda \int_0^t \nbE[\lambda(s)]ds + \lambda b$ for all $t\ge 0$, from which the expected intensity has the closed form
\begin{equation}\label{intensity_expected}
\nbE[\lambda(t)]= b \lambda e^{a\lambda t},\quad t\ge 0 .
\end{equation}
We note that there is some resemblance between this exponential expression in \eqref{intensity_expected} in the GPP case and the exponential asymptotic form $\lambda(t)\sim \lambda_\infty e^{\delta t}$ of the (deterministic) intensity
appeared in Theorems \ref{T1} and \ref{T1bis} in the NHPP case. However, due to the randomness feature in time of the intensity in this case, it is expected to observe different limiting behaviors for the branching process $N(t)$ with the GPP immigration.
More precisely, in the following it is shown that the distributional behaviour changes depending on whether the largest eigenvalue $\rho$ of $A$ is less than, larger than, or equal to $a\lambda$. The main result of this section is given in the following theorem.
 
 \begin{theorem}\label{T3}\normalfont
One has the following convergences in distribution:  \\
\noindent (1) When $\rho>a\lambda$, 
\begin{equation}\label{GPP_sup}
e^{-\rho t} N(t) \stackrel{\cal D}{\longrightarrow} \Z_T\ v,\qquad t\rightarrow +\infty,
\end{equation}
where $T\sim \Gamma(b/a,1)$ and $\{ \Z_t,\ t\ge 0\}$ is an independent L\'evy process with characteristic exponent $\psi(x):=\int_\nbR \left( 1- \exp \left[ -x z\right]\right)\Pi (dz)$, $x\ge 0$. Here, $\Pi(.)$ is defined by
\begin{equation}
\Pi (dz)  :=  \nbE \left[W^{a\lambda/\rho }\nbu_{[W\ge z]}\right] \ \frac{a\lambda}{\rho} z^{-a\lambda/\rho-1}\nbu_{[0<z<+\infty]}\ dz . \label{def_Pi_superc}
\end{equation}
\noindent (2) 
When $\rho<a\lambda$,
\begin{equation}\label{GPP_sub}
e^{-a\lambda t} N(t) \stackrel{\cal D}{\longrightarrow}  {\cal{Z}}\ \gamma,
\qquad t\rightarrow +\infty,
\end{equation}
where $\cal{Z}$ is a rv distributed as $\Gamma(b/a ,1)$ and $\gamma$ is the vector defined by 
\begin{equation}\label{def_gamma_GPP}
\gamma:=a\lambda (a\lambda I-A)^{-1} \mathbf{n}_0 
\end{equation}
\noindent (3) When $\rho=a\lambda$,
\begin{equation}\label{GPP_cri}
\frac{N(t)}{t}e^{-a\lambda t}\stackrel{\cal D}{\longrightarrow} {\cal Z} v,\qquad t\rightarrow +\infty,
\end{equation}
where ${\cal Z}$ is a rv distributed as $\Gamma (b/a, \nbE[W]a \lambda )$.
\end{theorem}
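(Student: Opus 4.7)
The overarching strategy is to work from the closed-form Laplace transform in Lemma \ref{lemLTZtGPP}, which reduces each of the three statements to analyzing the asymptotic behaviour as $t\to+\infty$ of the integral
\[
I_t(\sigma_t) := \int_0^t [1-\varphi^o_{t-y}(\sigma_t)]\, a\lambda e^{a\lambda y}\, dy,
\]
where $\sigma_t$ is the rescaling of $s$ that produces the target normalization of $N(t)$: $\sigma_t=se^{-\rho t}$ in (1), $\sigma_t=se^{-a\lambda t}$ in (2), $\sigma_t=se^{-a\lambda t}/t$ in (3). Once $I_t(\sigma_t)$ is shown to converge to an explicit $\ell(s)$, the relation $\varphi_t(\sigma_t)=(1+I_t(\sigma_t))^{-b/a}$ identifies $(1+\ell(s))^{-b/a}$ as the LT of the limit. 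The toolbox is exactly the one developed in Sections \ref{sec:supercritical}--\ref{sec:critical} for the NHPP: the inequalities $|1-e^x|\le |x|$ and $|e^x-1-x|\le x^2$ for $x\le 0$, the martingale $\langle u,N^o(t)e^{-\rho t}\rangle$, Perron--Frobenius asymptotics of $e^{(A-\rho I)t}$, and the a.s.\ convergence $N^o(t)e^{-\rho t}\to Wv$ of Lemma \ref{LTW}.

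For case (1), the rescaled argument lends itself to the identity
\[
\varphi^o_{t-y}(se^{-\rho t}) = \nbE\!\left[\exp\!\left(\langle se^{-\rho y},\; N^o(t-y)e^{-\rho(t-y)}\rangle\right)\right],
\]
from which Lemma \ref{LTW} delivers the pointwise limit $\varphi^o_{t-y}(se^{-\rho t})\to\nbE[\exp(We^{-\rho y}\langle s,v\rangle)]$ as $t\to+\infty$ for each fixed $y$. A dominated-convergence argument patterned on the treatment of $Q_{1,t}$ in Section \ref{sec:supercritical}, with $\lambda(x)$ replaced by $a\lambda e^{a\lambda x}$, is feasible precisely because $\rho>a\lambda$ makes $e^{-\rho x}a\lambda e^{a\lambda x}$ integrable on $[0,+\infty)$. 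The resulting limit
\[
\int_0^\infty \bigl[1-\nbE\bigl(\exp(We^{-\rho y}\langle s,v\rangle)\bigr)\bigr]\, a\lambda e^{a\lambda y}\, dy
\]
is then rewritten, via the substitution $z=e^{-\rho y}W$ and Fubini's theorem, as $\psi(\langle -s,v\rangle)$ with $\Pi$ equal to the measure in \eqref{def_Pi_superc}. The announced limit $\Z_T v$ is finally recovered by the classical identity $\nbE[e^{-x\Z_T}]=\nbE[e^{-T\psi(x)}]=(1+\psi(x))^{-b/a}$ for $T\sim\Gamma(b/a,1)$.

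Cases (2) and (3) are best handled by a Taylor expansion of $\varphi^o_{t-y}$ at the origin since the rescalings force $\sigma_t\to 0$. Linearizing $1-\varphi^o_{t-y}(\sigma_t)=\langle -\sigma_t,e^{A(t-y)}\mathbf{n}_0\rangle+\text{remainder}$ and changing variables to $r=t-y$ reduces the main term in case (2) to $\int_0^t a\lambda\langle -s,e^{(A-a\lambda I)r}\mathbf{n}_0\rangle dr$, which converges to $a\lambda\langle -s,(a\lambda I-A)^{-1}\mathbf{n}_0\rangle=\langle -s,\gamma\rangle$ since $\rho<a\lambda$ ensures all eigenvalues of $A-a\lambda I$ have negative real part. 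In case (3) the same linearization yields $\frac{a\lambda}{t}\int_0^t\langle -s, e^{(A-a\lambda I)r}\mathbf{n}_0\rangle dr$, and because $e^{(A-a\lambda I)r}\mathbf{n}_0$ converges (by Perron--Frobenius) to a rank-one image in the direction of $v$, a Cesaro-type argument produces a constant multiple of $\langle -s,v\rangle$, identifying $(1+\cdot)^{-b/a}$ as the LT of $\Z v$ with the prescribed gamma parameters. The quadratic remainder in the Taylor step is handled exactly as the term $R_{1,t}$ in Section \ref{sec:subcritical} via the second-moment bounds on $\|N^o(\cdot)\|$.

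The hardest step is likely the dominated-convergence justification in case (1): one must dominate the integrand uniformly on $[0,t]$ despite the non-uniformity in $y$ of the Lemma \ref{LTW} convergence when $y$ is close to $t$, and then the algebraic manipulations that turn the resulting $y$-integral into the L\'evy measure \eqref{def_Pi_superc} must be carried out carefully. Case (3) presents a secondary difficulty as both the Taylor-remainder estimate and the Cesaro limit of $\frac{1}{t}\int_0^t\langle -s,e^{(A-a\lambda I)r}\mathbf{n}_0\rangle dr$ must be controlled simultaneously so that their joint contribution to $I_t(\sigma_t)$ is correctly extracted.
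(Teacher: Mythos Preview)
Your plan for parts (1) and (2) is essentially the paper's own argument. In (1) the paper (Section~\ref{sec:superc_gpp}) does exactly your dominated-convergence step with the martingale bound, obtaining the integrable envelope $a\lambda K e^{(a\lambda-\rho)y}$, and then performs the substitution $z=we^{-\rho y}$ inside $\nbP(W\in dw)$ to arrive at \eqref{def_Pi_superc}; the identification $(1+\psi(\cdot))^{-b/a}$ is isolated as Lemma~\ref{Lemma_superc}. In (2) the paper (Section~\ref{subsec:sc_gpp}) is marginally more direct than your linear-plus-quadratic split---it bounds $1-e^x\le -x$ to get a dominating function and then passes the limit inside twice---but the content is identical to what you outline.

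Case (3) is where the two routes genuinely differ. The paper does \emph{not} linearize $1-\varphi^o_{t-y}$ and take a Ces\`aro limit of $e^{(A-a\lambda I)r}\mathbf{n}_0$. Instead (Section~\ref{sec:crit_gpp}) it replaces $N^o(t(1-y))e^{-a\lambda t(1-y)}$ by its a.s.\ limit $Wv$, splitting $\Xi_{t,s}=\Xi^1_{t,s}+\Xi^2_{t,s}$; the main term $\Xi^1_{t,s}$ converges trivially to $-a\lambda\,\nbE[W]\langle s,v\rangle$, while $\Xi^2_{t,s}\to 0$ is proved by decomposing $s=\alpha u+s_0$ with $\langle s_0,v\rangle=0$ and invoking an $L^2$ estimate for $\langle s_0,N^o(t)\rangle$ from \cite[Proposition~3]{A69}. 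Your Taylor/Ces\`aro route is precisely the one the paper uses for the NHPP analogue (Theorem~\ref{T1bis}(iii)), and your quadratic-remainder bound does carry over (it gives $O(1/t^2)$ as in \eqref{R1t_rho_positive}), so the approach is sound. Its advantage is that it sidesteps the Athreya estimate and the eigen-decomposition of $s$; the paper's advantage is that the constant $\nbE[W]$ and the direction $v$ appear directly from Lemma~\ref{LTW} without any Perron--Frobenius computation. Be aware, though, that the paper's own NHPP calculation identifies $\lim_{r\to\infty}e^{(A-\rho I)r}\mathbf{n}_0=v_1 u$ (direction $u$, see \eqref{supercritical_gamma_equal} and Table~\ref{table:dir}), whereas consistency with $\nbE[N^o(t)e^{-\rho t}]\to\nbE[W]v$ from Lemma~\ref{LTW} forces direction $v$; if you carry out your programme for (3) you will have to reconcile this and extract the constant $\nbE[W]a\lambda$ correctly, which the paper's $Wv$-based splitting handles automatically.
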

\begin{remark}\normalfont
In the case $\rho>a\lambda$ we may note that, since $\Pi(.)$ defined by \eqref{def_Pi_superc} has support on $(0,+\infty)$ and verifies $\int_{(0,+\infty)} \min(1,z)\Pi (dz) <+\infty$ (precisely because of the condition $ \rho>a\lambda$), the underlying L\'evy process $\{ \Z_t,\ t\ge 0\}$ appearing in \eqref{GPP_sup} belongs to the class of subordinators according to \cite[Lemma 2.14, p.55]{K06}.
\end{remark}
\begin{remark}\normalfont
As shown in \eqref{intensity_expected}, the expected intensity in the present GPP case has an exponential form, so that it is natural to compare the limiting convergence results in Theorem \ref{T3} to those in Theorems \ref{T1} and \ref{T1bis} in the NHPP case with asymptotic intensity $\lambda(t)\sim \lambda_\infty e^{\delta t}$ for $\delta>0$. The limits in those theorems are different, however there exists an analogy between $a\lambda$ and $\delta$: Table \ref{table:dir} summarizes the different {\it directions of the supports} of the limiting distributions obtained in \eqref{subcritical_gamma_positive}, \eqref{supercritical_gamma_equal} and \eqref{convergence_supercritical} for $\rho<\delta$, $\rho=\delta$ and $\rho>\delta$ in the NHPP case, to those in \eqref{GPP_sub}, \eqref{GPP_cri} and \eqref{GPP_sup} for $\rho<a\lambda$, $\rho=a\lambda$ and $\rho>a\lambda$ in the GPP case. Each value in Table \ref{table:dir}, that all belong to $\nbR_+^k$, roughly shows the position in which the renormalized processes $N(t)$ is located asymptotically in the corresponding case.
\begin{table}[h]
\begin{tabular}{|c|c|c|c|}
\hline
 & $\rho<\xi$ & $\rho=\xi$ & $\rho>\xi$\\
 \hline
 NHPP: $\xi=\delta$&  $\displaystyle (\xi I -A)^{-1}\mathbf{n}_0$ & $u$ & $v$\\
 \hline
 GPP: $\xi=a\lambda$ & $\displaystyle (\xi I -A)^{-1}\mathbf{n}_0$ & $v$ & $v$\\
 \hline
\end{tabular}
\caption{Direction of limiting distribution}
	\label{table:dir}
\end{table}
Interestingly, the directions are the same except for $\rho=\delta$ in the NHPP case and $\rho=a\lambda$ in the GPP, which are respectively given by the vectors $u$ and $v$.
\end{remark}
The proofs of each case in Theorem \ref{T3} are provided in the following Section \ref{subsec:sc_gpp}, Section \ref{sec:superc_gpp}, and Section \ref{sec:crit_gpp} respectively.

\subsection{Proof of Theorem \ref{T3} in the case $\rho> a \lambda$}\label{sec:superc_gpp}
In \eqref{scGPP}, with a choice of the renormalization function $g(t)=e^{\rho t}$ we get
\begin{equation}\label{supercGPPgt}
 \varphi_t (s/g(t))=\varphi_t (s e^{-\rho t})=\bigg\{1+ \int^t_0 [1-\varphi^o_{t-y}(se^{-\rho t})] a \lambda  e^{a \lambda y} dy \bigg\}^{-b/a}
 \end{equation}
for all $ s \in (-\infty,0]^k$. The proof is divided into two steps as follows.

{\bf Step 1: Studying the convergence of $\varphi_t (s e^{-\rho t})$ as $t\to +\infty$.} It is convenient to introduce the function
 \begin{multline}\label{xit_superc}
\Xi_{t,s}:= \int^t_0 [1-\varphi^o_{t-y}(se^{-\rho t})] a \lambda  e^{a \lambda y} dy \\
 = \int^\infty_0 \nbu_{[0< y< t]} \nbE \left[ 1- \exp \left( <s, N^o(t-y)/e^{\rho(t-y)}> e^{-\rho y}\right)\right] a \lambda  e^{a \lambda y} dy,
 \end{multline}
so that $\varphi_t (s e^{-\rho t})=\{1+\Xi_{t,s}\}^{-b/a}$, $t\ge 0$. Thus, studying the limit of $\varphi_t (s e^{-\rho t})$ as $t\to+\infty$ essentially requires finding $\lim_{t\to +\infty} \Xi_{t,s}$, which will be completed by the dominated convergence theorem. First note that for all $y\in (0,+\infty)$ one has that $N^o(t-y)/e^{\rho(t-y)}\longrightarrow Wv$, $t\to \infty$, a.s. from Lemma \ref{LTW}. Since $s$ has non positive entries, one has that $0 \le 1- \exp \left[ <s, N^o(t-y)/e^{\rho(t-y)}> e^{-\rho y}\right]\le 1$ for a fixed $y\in (0,+\infty)$ and thus it implies that
\begin{equation}\label{superc_conv1}
\nbE \left[ 1- \exp \left( <s, N^o(t-y)/e^{\rho(t-y)}> e^{-\rho y}\right)\right]\longrightarrow\nbE \left[ 1- \exp \left( <s,v> We^{-\rho y}\right)\right],~~t\to+\infty,
\end{equation}
by the dominated convergence theorem. Also, using again the inequality $1-e^{-u}\le u$ for $ u\ge 0$, the integrand in (\ref{xit_superc}) is upper bounded as
\begin{align*}
0  &\le \nbu_{[0< y< t]}\nbE \left[ 1- \exp \left( <s, N^o(t-y)/e^{\rho(t-y)}> e^{-\rho y}\right)\right] a \lambda  e^{a \lambda y}\\
&\le   \nbu_{[0< y< t]}\nbE \left[ -<s, N^o(t-y)/e^{\rho(t-y)}> e^{-\rho y}\right] a \lambda  e^{a \lambda y}\\
&=a\lambda  \nbu_{[0< y< t]}\nbE \left[ -<s, N^o(t-y)/e^{\rho(t-y)}> \right] e^{(a \lambda -\rho)y}.
\end{align*}
By the similar martingale argument applied to the one leading to \eqref{ineq_critical} for example, one can show that $\nbu_{[0< y< t]}\nbE \left[ -<s, N^o(t-y)/e^{\rho(t-y)}>\right]$ is upper bounded by some constant say $K$ which is independent of $t$ and $y$. That is, 
\begin{equation}\label{superc_conv2}
0 \le \nbu_{[0< y< t]}\nbE \left[ 1- \exp \left( <s, N^o(t-y)/e^{\rho(t-y)}> e^{-\rho y}\right)\right] a \lambda  e^{a \lambda y}\le a\lambda K e^{(a \lambda -\rho)y}
\end{equation}
which is integrable over $y\in (0,+\infty)$ when $\rho> a\lambda$. Hence, thanks to \eqref{superc_conv1} and \eqref{superc_conv2} we arrive by the dominated convergence theorem at
\begin{equation}\label{Xiinfty}
\Xi_{t,s}\longrightarrow \Xi_{\infty,s}:=\int_0^\infty \nbE \left[ 1- \exp \left( <s,v> We^{-\rho y}\right)\right] a \lambda  e^{a \lambda y} dy,\quad t\to +\infty ,
\end{equation}
so that the renormalized LT in (\ref{supercGPPgt}) converges as
\begin{equation}\label{superc_conv3}
\varphi_t (s e^{-\rho t}) \longrightarrow \tilde{\varphi}(s):= \{1+ \Xi_{\infty,s}  \}^{-b/a} ,\quad t\to+ \infty .
\end{equation}

{\bf Step 2: Identifying the LT $\tilde{\varphi}(s)$.} In order to interpret \eqref{superc_conv3} as the convergence towards some known distribution, we use the following elementary Lemma (its proof is given in Appendix B):
\begin{lemm}\label{Lemma_superc}\normalfont
Let $\{ \Z_t,\ t\ge 0\}$ be a L\'evy process with characteristic exponent $\psi(x)$ such that $\nbE[e^{-x\Z_t}] =e^{-t\psi(x)}$ for $ x\ge 0$, and let $T$ be a rv distributed as $\Gamma(\zeta ,1)$, independent from $\{ \Z_t,\ t\ge 0\}$. Then the LT of $\Z_T$ is given by
\begin{equation}\label{LT_Lemma_superc}
\nbE[e^{-x \Z_T}]= \{1+  \psi(x) \}^{-\zeta},\quad x\ge 0 .
\end{equation}
\end{lemm}
The aim is now to write $\tilde{\varphi}(s)$ in (\ref{superc_conv3}) in the form of \eqref{LT_Lemma_superc}. We first write $\Xi_{\infty,s}$ in (\ref{Xiinfty}) as
$$
\Xi_{\infty,s}=\int_0^\infty \int_0^\infty \left( 1- \exp \left[ <s,v> w e^{-\rho y}\right]\right) a \lambda  e^{a \lambda y} dy \ \nbP(W\in dw).
$$
Performing a change of variable $z:=we^{-\rho y}$ (i.e. $y=-\frac{1}{\rho} \ln \frac{z}{w}$) within the integral in $y$, it may be expressed as
\begin{eqnarray*}
\Xi_{\infty,s} & = & \int_0^\infty  \int_0^w \left( 1- \exp \left[ <s,v> z\right]\right) \frac{a\lambda}{\rho} \left( \frac{z}{w} \right)^{-a\lambda/\rho } \frac{dz}{z}  \ \nbP(W\in dw)\\
&=& \int_0^\infty \left( 1- \exp \left[ <s,v> z\right]\right) \bigg\{\int_0^\infty  w^{a\lambda/\rho }\nbu_{[w\ge z]} \nbP(W\in dw)\bigg\} \frac{a\lambda}{\rho} z^{-a\lambda/\rho-1} dz\\
&=&  \int_0^\infty \left( 1- \exp \left[ <s,v> z\right]\right)\ \nbE \left[ W^{a\lambda/\rho }\nbu_{[W\ge z]}\right] \ \frac{a\lambda}{\rho} z^{-a\lambda/\rho-1} dz \\
&=& \int_\nbR \left( 1- \exp \left[ <s,v> z\right]\right)\Pi (dz),
\end{eqnarray*}
where the measure $\Pi (dz)$ on $(0,+\infty)$ is defined as \eqref{def_Pi_superc}. Finally, we get the following expression for (\ref{superc_conv3}):
$$
\tilde{\varphi}(s) = \big\{1+  \psi(<s,v>) \big\}^{-b/a},\quad s\in (-\infty , 0]^k ,
$$
so that one deduces from Lemma \ref{Lemma_superc} the convergence result in (\ref{GPP_sup}). 

 \subsection{ Proof of Theorem \ref{T3} in the case $\rho<a\lambda$}\label{subsec:sc_gpp}
 
 After changing a variable $y:=t-y$, (\ref{scGPP}) is rewritten as
\[
\varphi_t(s)=\bigg\{1+ \int^t_0 [1-\varphi^o_{y}(s)] a \lambda  e^{a \lambda (t-y)} dy \bigg\}^{-b/a},\qquad t\geq 0,~~s \in (-\infty, 0]^k.
\]
 Let us consider the renormalizing function $g(t)=e^{a\lambda t}$, so that
 \begin{equation}\label{scGPPgt}
 \varphi_t (s/g(t))=\varphi_t (s e^{-a\lambda t})=\bigg\{1+ \int^t_0 [1-\varphi^o_{y}(se^{-a\lambda t})] a \lambda  e^{a \lambda (t-y)} dy \bigg\}^{-b/a}.
 \end{equation}
 In the following, the limit of the integral on the right-hand side of (\ref{scGPPgt}) is studied in the subcritical case. First, similar to (\ref{xit_superc}), let
 \begin{equation}\label{xit}
\Xi_{t,s}:= \int^t_0 [1-\varphi^o_{y}(se^{-a\lambda t})] a \lambda  e^{a \lambda (t-y)} dy.
 \end{equation}
 To apply the dominated convergence theorem, let us define
 \begin{align}\label{xits}
 \Xi_{t,s,y} &:=\nbu_{[0< y< t]} [1-\varphi^o_{y}(se^{-a\lambda t})] a \lambda  e^{a \lambda (t-y)}
  =  \nbu_{[0< y< t]}\nbE[1-e^{<s, N^o(y)> e^{-a \lambda t}}] a \lambda  e^{a \lambda (t-y)}.
 \end{align}
Since
\begin{equation}\label{upper}
  1-e^{<s, N^o(y)> e^{-a \lambda t}}  
  \leq 
  -<s,N^o(y)> e^{-a \lambda t},
\end{equation}
where the last equality is due to the negative entries in $s$, (\ref{xits}) is bounded by
\begin{align}\label{dom}
\Xi_{t,s,y} & \leq -\nbu_{[0< y< t]} \nbE[<s,N^o(y)>] a\lambda e ^{-a \lambda y} 
\leq - \nbE[<s,N^o(y)>] a\lambda e ^{-a \lambda y} \nonumber\\
&= -<s, \nbE[N^o(y)]> a\lambda e ^{-a \lambda y}  :=\Xi^\ast_{s,y}.
\end{align}
We recall from \cite[p.202]{A72} that the mean matrix of the multitype process $N^o(t)$ is expressed as $\nbE[N^o(y)]=e^{Ay}\mathbf{n}_0$ where the matrix $A$ is defined in (\ref{matrixA}) and $\mathbf{n}_0=(1,0,...,0)$. For the case $\rho<a\lambda $, the integral 
$\int^{+\infty}_0 e^{(A-a\lambda I)y} dy$ is convergent because all eigenvalues of the matrix $A-a\lambda I$ have negative real part in the case of $\rho <a\lambda$. In turn, one concludes that $\int^{+\infty}_0 \Xi^\ast_{s,y}dy$  converges. Therefore, for a fixed $y\in (0,+\infty)$ one finds 
\begin{equation}\label{pwlimit}
\nbE[1-e^{<s, N^o(y)> e^{-a \lambda t}}]   e^{a \lambda t} \longrightarrow -\nbE [<s, N^o(y)>],\qquad t\rightarrow +\infty
\end{equation}
by the dominated convergence theorem. Indeed, from (\ref{upper}) 
$| 1-e^{<s, N^o(y)> e^{-a \lambda t}}|e^{a \lambda t}$ is upper bounded by $-<s,N^o(y)>$ which has a finite expectation. Finally, because of the bound for the integrand $\Xi_{t,s,y}$ obtained in (\ref{dom}) and the pointwise limit in (\ref{pwlimit}), one deduces that (\ref{xit}) converges to
\begin{align*}
\Xi_{t,s} &\longrightarrow - \int^{+\infty}_0 \nbE(<s,N^o(y)>) a\lambda e^{-a \lambda y} dy = - \int^{+\infty}_0 <s, e^{Ay} \mathbf{n}_0> a\lambda e^{-a \lambda y} dy\\
&~~~= - <s, \int^{+\infty}_0 a\lambda e^{(A-a\lambda I)y}dy.\mathbf{n}_0>=<s,a\lambda (a\lambda I-A)^{-1} \mathbf{n}_0>,
\end{align*}
as $t\rightarrow +\infty$. Consequently, it follows that (\ref{scGPPgt}) converges to
\[
\varphi_t(se^{-a\lambda t}) \longrightarrow  \bigg\{1-<s, a\lambda (a\lambda I-A)^{-1} \mathbf{n}_0> \bigg\}^{-b/a},\qquad t\rightarrow +\infty,
\]
for all $s\in (-\infty, 0]^k$, which entails (\ref{GPP_sub}) with the vector $\gamma$ defined as (\ref{def_gamma_GPP}).

\subsection{Proof of Theorem \ref{T3} in the case $\rho= a\lambda$}\label{sec:crit_gpp}
We consider here the renormalizing function $g(t):= t e^{\rho t}= t e^{a\lambda t}$. As in \eqref{supercGPPgt} and \eqref{xit_superc}, after changing a variable $y:=y/t$ we have for all $ s \in (-\infty,0]^k$,
\begin{eqnarray}
 \varphi_t (s/g(t))&=&\varphi_t (s e^{-a\lambda t}/t)=\bigg\{1+ \int^t_0 [1-\varphi^o_{t-y}(se^{-a\lambda t}/t)] a \lambda  e^{a \lambda y} dy \bigg\}^{-b/a}\nonumber\\
 &=& \bigg\{1+ \int^t_0 \left(1- \nbE\left[ \exp(<s, N^o(t-y)> e^{-a\lambda t}/t)
 \right] \right) a \lambda  e^{a \lambda y} dy \bigg\}^{-b/a}\nonumber\\
 &=& \bigg\{1+ \int^1_0 t\left(1-\nbE\left[ \exp(<s, N^o(t(1-y))> e^{-a\lambda t}/t)
 \right] \right) a \lambda  e^{a \lambda t y} dy \bigg\}^{-b/a}\nonumber\\
 &=& \{1+ \Xi_{t,s} \}^{-b/a},\label{critGPPgt}
\end{eqnarray}
where $\Xi_{t,s}$ is now defined by
\begin{eqnarray}
\Xi_{t,s} &:=& \int^1_0 t\left(1-\nbE\left[ \exp(<s, N^o(t(1-y))> e^{-a\lambda t}/t\right]\right) a \lambda  e^{a \lambda t y} dy\nonumber\\
&=& \int^1_0 t\left(1-\nbE\left[ \exp(<s, Wv> e^{-a\lambda ty }/t)\right]\right) a \lambda  e^{a \lambda t y} dy\nonumber\\
&~~+& \int^1_0 t\left(\nbE\left[ \exp(<s, Wv> e^{-a\lambda ty }/t)\right] - \nbE\left[ \exp(<s, N^o(t(1-y))> e^{-a\lambda t}/t)\right] \right) a \lambda  e^{a \lambda t y} dy\nonumber\\
&:=& \Xi^1_{t,s} + \Xi^2_{t,s}. \label{crit_conv1}
\end{eqnarray}
In the following we shall determine the limits of $\Xi^1_{t,s}$ and 
$\Xi^2_{t,s}$ separately as $t\to + \infty$. For notational convenience, let $\Xi^2_{t,s}:=\int_0^1 \Upsilon_s^2(t,y) dy$ where
\begin{equation}\label{up2}
\Upsilon^2_s(t,y):=  t\left(\nbE\left[ \exp(<s, Wv> e^{-a\lambda ty }/t)\right] - \nbE\left[ \exp(<s, N^o(t(1-y))> e^{-a\lambda t}/t)\right] \right) a \lambda  e^{a \lambda t y}.
\end{equation}

{\bf Step 1: Studying the convergence of $\Xi^1_{t,s}$ as $t\to + \infty$.} It is readily obtainable that using the inequality $0\le 1-e^x\le -x$ for $x\le 0$, one has for all $t\ge 0$ and $y\in (0,1)$ that
\begin{align*}
0 &\le t\left[1- \exp(<s, Wv> e^{-a\lambda ty }/t)\right] a \lambda  e^{a \lambda t y}\\
 &\le - t   <s, Wv> (e^{-a\lambda ty }/t) a \lambda  e^{a \lambda t y}=-   <s, Wv>   a \lambda,
\end{align*}
which is integrable, so that for a fixed $y\in (0,1)$ one has by the dominated convergence theorem that $ t\left(1-\nbE\left[ \exp(<s, Wv> e^{-a\lambda ty }/t)\right]\right) a \lambda  e^{a \lambda t y} \longrightarrow -\nbE\left[ <s, Wv> \right] a \lambda$ as $t\to +\infty$. Likewise:
$$
0\le t\left(1-\nbE\left[ \exp(<s, Wv> e^{-a\lambda ty }/t)\right]\right) a \lambda  e^{a \lambda t y}
\le  -\nbE\left[ <s, Wv> \right] a \lambda,  
$$
a constant, so that by the dominated convergence theorem one deduces that
\begin{equation}\label{step1}
\lim_{t\to +\infty}   \Xi^1_{t,s} =-\nbE\left[<s, Wv> \right] a \lambda= - <s, \nbE[W] a \lambda v>.
\end{equation}

{\bf Step 2: Dominating $\Upsilon^2_s(t,y)$.} In order to study $\lim_{t\to +\infty}\Xi^2_{t,s}$, we again use the dominated convergence theorem. First, it can be shown that $|\Upsilon^2_s(t,y)|$ in (\ref{up2}) is upper bounded by some constant as:
\begin{eqnarray}
|\Upsilon^2_s(t,y)| &\le & t\nbE[ | <s, Wv> e^{-a\lambda ty }/t - <s, N^o(t(1-y))> e^{-a\lambda t}/t | ] a \lambda  e^{a \lambda t y} \nonumber\\
&= & a\lambda \nbE[ (| <s, Wv>   - <s, N^o(t(1-y))> e^{-a\lambda t(1-y)} | ]\label{step2}\\
&\le & a\lambda \nbE [ | <s, Wv>| ]  + a\lambda\nbE [ | <s, N^o(t(1-y))> e^{-a\lambda t(1-y)}| ]\nonumber\\
&=& - a\lambda \nbE [  <s, Wv> ]  - a\lambda \nbE [  <s, N^o(t(1-y))> e^{-a\lambda t(1-y)}],\nonumber
\end{eqnarray}
where the first inequality is obtained from (\ref{A}) and the last equality holds because $W$ and $N^o(t(1-y))$ are non negative or have non negative entries and $s$ has negative entries.  Using again the constant $\kappa$ satisfying \eqref{sj} and the martingale argument, one thus obtains together with the above result that
\begin{equation*}\label{crit_conv3}
|\Upsilon^2_s(t,y)| \le - a\lambda \nbE[ <s, Wv> ]  - a\lambda \kappa  <u, {\bf n}_0> ,\quad \forall t\ge 0,\ \forall y\in (0,1).
\end{equation*}

{\bf Step 3: Pointwise convergence of $\Upsilon^2_s(t,y)$ towards $0$ as $t\to +\infty$.} Let $y\in (0,1)$ be fixed. Since $\nbR^k$ can be decomposed as the direct sum of $\nbR u$ and $\left( \nbR v \right)^{\bot} $ (the orthogonal vector space of $\nbR v$ for the euclidian inner product), there exists some (unique) $\alpha\in\nbR$ and $s_0\in \left( \nbR v \right)^{\bot} $ such that $ s=\alpha u + s_0$. Since $<s_0,v>=0$, it follows that (\ref{step2}) is expressed as
\begin{eqnarray*}
&&|\Upsilon^2_s(t,y)| \le  a\lambda \nbE[|<s, Wv>   - <s, N^o(t(1-y))> e^{-a\lambda t(1-y)} | )\\
&=& a\lambda \nbE( | \alpha < u, Wv>   - \alpha <u, N^o(t(1-y))> e^{-a\lambda t(1-y)} - <s_0, N^o(t(1-y))> e^{-a\lambda t(1-y)}|).
\end{eqnarray*}
Since $<u,Wv>=W <u,v>=W.1=W$, using the triangle inequality followed by Cauchy Schwarz inequality yields 
\begin{multline}\label{crit_conv4}
|\Upsilon^2_s(t,y)| \le a\lambda |\alpha| \Big\{ \nbE\big[ \big|  W  - <u, N^o(t(1-y))> e^{-a\lambda t(1-y)} \big|^2 \big]\Big\}^{1/2}\\
+ a\lambda  \Big\{ \nbE\big[\big|<s_0, N^o(t(1-y))> e^{-a\lambda t(1-y)}\big|^2\big] \Big\}^{1/2}.
\end{multline}
Then it will be shown that both terms on the right-hand side of \eqref{crit_conv4} tend to $0$ as $t\to +\infty$. The reason why $s$ is decomposed along $\nbR u$ and $\left( \nbR v \right)^{\bot} $ is that the first term is linked to the martingale $\{<u,N^o(t)e^{-\rho t}>,\ t\ge 0\}=\{<u,N^o(t)e^{-a\lambda t}>,\ t\ge 0\}$, whereas  in the second term the behaviour of $\{<s_0,N^o(t)e^{-\rho t}>,\ t\ge 0\}$ may be controlled precisely because $s_0\in \left( \nbR v \right)^{\bot}$ thanks to the estimates given in \cite{A69} . Indeed, one has from \cite[(iii) p.204]{A72} that $\left(\nbE[|| N^o(t)||^2 e^{-2\rho t}]\right)_{t\ge 0}$ is uniformly upper bounded with $\rho=a\lambda$ here. Since $\nbE[ | <u, N^o(t)> e^{-a\lambda t} |^2]$ is upper bounded by $\nbE[|| N^o(t)||^2 e^{-2\rho t}]$ up to a constant for all $t\ge 0$, one deduces that the martingale $\{<u,N^o(t)e^{-a\lambda t}>,\ t\ge 0\}$ is uniformly square integrable, hence converges in mean square towards $W$ as $t\to +\infty$; and in turn, the first term on the right-hand side of \eqref{crit_conv4} converges to $0$ 	as $t\to +\infty$. And, from \cite[Proposition 3]{A69} together with $<s_0,v>=0$, there exists some real number $a(s_0)<\rho=a\lambda$ as well as an integer $\gamma(s_0)$ (both depending on $s_0$, see their precise definitions in \cite[(9a) and (9b)]{A69}) such that one of the three following situations occurs:
$$\nbE\Big[\big|<s_0, N^o(t)>\big|^2\Big] =\left\{
\begin{array}{cl}
O(e^{2a(s_0)t} t^{2\gamma(s_0)})& \mbox{if }2a(s_0)>\rho=a\lambda,\\
O(e^{2a(s_0)t} t^{2\gamma(s_0)+1})& \mbox{if }2a(s_0)=\rho=a\lambda,\\
O(e^{\rho t})=O(e^{a\lambda t}) & \mbox{if }2a(s_0)<\rho=a\lambda.
\end{array}
\right.
$$
Here the above three cases are corresponding to \cite[a), b) and c) of Proposition 3]{A69} respectively. In all cases, since $a(s_0)$ verifies $a(s_0)<\rho=a\lambda$, one checks easily that $\nbE[|<s_0, N^o(t)>|^2] e^{-2\rho t}=\nbE [|<s_0, N^o(t)>|^2] e^{-2a\lambda t}$ tends to $0$ as $t\to +\infty$. Hence the second term in the right-hand side of \eqref{crit_conv4} tends to $0$ as $t\to +\infty$ (for a fixed $y\in (0,1)$). Combining all the above results, we thus prove that both terms on the right-hand side of \eqref{crit_conv4} converge to $0$. Therefore, it is concluded that (\ref{up2}) goes to zero as $t\to+\infty$ for all $y\in (0,1)$.

{\bf Step 4: Convergence of $\Xi^2_{t,s}$ and conclusion.} Step 2 and Step 3 imply by the dominated convergence theorem that $\lim_{t\to +\infty}\Xi^2_{t,s}=0$. Then together with (\ref{step1}), from (\ref{crit_conv1}) it follows that \eqref{critGPPgt} converges to
$$\varphi_t (s e^{-a\lambda t}/t)\longrightarrow \big\{1 - <s, \nbE[W]a \lambda v>   \big\}^{-b/a}, \quad t\to+\infty ,$$
so that we proved (\ref{GPP_cri}).

 \section{Transient expectation when $k=2$}\label{sec:app}
We shall hereafter consider two-type branching processes (i.e. $k=2$) to study transient expectation of the number of particles at time $t$. Assume that the lifetime of type $j$ particles for $j=1,2$ is exponentially distributed as ${\cal E}(\mu_i)$. The branching mechanism is given by the following generating functions (see Definition \eqref{def_gen_fc})
$$
h_1(z_1,z_2)= p_1(0,0)+ p_1(0,1) z_2,\quad
h_2(z_1,z_2)= p_2(0,0)+ p_2(1,0) z_1,\quad (z_1,z_2)\in [0,1]^2,
$$
where probabilities $p_{12}:=p_1(0,1)$ and $p_{21}:=p_2(1,0)$ in $(0,1]$ satisfy $ p_{12}p_{21}<1$, which means that type 1 particle (resp. $2$) produces a type 2 (resp. $1$) particle with probability $p_{12}$ (resp. $p_{21}$), or else dies with probability $p_1(0,0)=1-p_{12}$ (resp. $p_2(0,0)=1-p_{21}$). Finally, we denote by $t\ge 0 \mapsto m(t)=\nbE[S(t)]$ the renewal function associated to the immigration process $\{ S(t),\ t\ge 0\}$.
\begin{theorem}\normalfont
At time $t$, the transient expectation $\nbE[N_1(t)]$ for type 1 particle is given by
\begin{multline}\label{N_1_transient6}
\nbE [N_1(t)]= p_{12}\int_0^t \int_0^{t-y} \left[\Psi(t-y)-\Psi(t-y-z)\right]\mu_1 e^{-\mu_1 z}dz \; dm(y)\\
+ (1-p_{12})\int_0^t m(t-s)\Psi(ds)  ,\quad t\ge 0 ,
\end{multline}
where $\Psi(ds)$ is given by 
\begin{eqnarray}
\Psi(ds)&=& \delta_0(ds) + \mu_1 \mu_2 p_{12}p_{21}\left[ \frac{1}{\zeta_1 (\zeta_2-\zeta_1)}e^{\zeta_1 s} + \frac{1}{\zeta_2 (\zeta_1-\zeta_2)}e^{\zeta_2 s}\right]\; ds,\ s\ge 0 , \label{expression_psi_transient}\\
\mbox{with}\quad \zeta_1 &:=& \frac{1}{2}\left[ -(\mu_1+\mu_2) + \sqrt{(\mu_1-\mu_2)^2 + 4\mu_1\mu_2  p_{12}p_{21}}\right],\label{expre_zeta1}\\
\zeta_2 &:=& \frac{1}{2}\left[ -(\mu_1+\mu_2) - \sqrt{(\mu_1-\mu_2)^2 + 4\mu_1\mu_2  p_{12}p_{21}}\right].\label{expre_zeta2}
\end{eqnarray}
\end{theorem}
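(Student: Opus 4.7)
The plan is to reduce $\nbE[N_1(t)]$ to a Stieltjes convolution between the first moment measure $dm$ of the immigration process and the single-ancestor mean $M_1(s):=\nbE[N_1^o(s)\mid N^o(0)=(1,0)']$, then to solve for $M_1$ via Laplace transforms of the branching renewal equations, and finally to rearrange the resulting convolution into the announced form by identifying the measure $\Psi$. Writing $N_1(t)=\sum_{i=1}^{S(t)}N_1^{o,i}(t-T_i)$ with the $(N^{o,i})$ iid copies of $N^o$ independent of the immigration point process, and applying Campbell's formula to the first moment measure $dm$, I would immediately obtain
\[
\nbE[N_1(t)]=\int_0^t M_1(t-y)\,dm(y).
\]

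Second, conditioning on the lifetime $L\sim\mathcal{E}(\mu_1)$ of the initial type-$1$ particle together with the Bernoulli outcome at its death (type-$2$ offspring with probability $p_{12}$, barren death with probability $1-p_{12}$), and analogously for a type-$2$ ancestor, one obtains the coupled renewal system
\[
M_1(s)=e^{-\mu_1 s}+p_{12}\int_0^s\mu_1 e^{-\mu_1 z}\tilde M_1(s-z)\,dz,\quad \tilde M_1(r)=p_{21}\int_0^r\mu_2 e^{-\mu_2 u}M_1(r-u)\,du,
\]
where $\tilde M_1(r):=\nbE[N_1^o(r)\mid N^o(0)=(0,1)']$. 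Equivalently, $M_1(s)$ is the first coordinate of $e^{As}\mathbf{n}_0$ with $A$ defined in \eqref{matrixA}. Taking Laplace transforms reduces this system to the rational function $\int_0^\infty e^{-\xi s}M_1(s)\,ds=\frac{\xi+\mu_2}{(\xi-\zeta_1)(\xi-\zeta_2)}$, whose denominator is the characteristic polynomial $\xi^2+(\mu_1+\mu_2)\xi+\mu_1\mu_2(1-p_{12}p_{21})$ of $A$, with roots exactly $\zeta_1,\zeta_2$ matching \eqref{expre_zeta1}-\eqref{expre_zeta2}.

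Finally, to produce \eqref{N_1_transient6} I would split $M_1$ along the same $p_{12}/(1-p_{12})$ dichotomy and identify $\Psi$ as the inverse Laplace-Stieltjes transform of the rational function $1+\frac{p_{12}p_{21}(\xi+\mu_1+\mu_2)}{(1-p_{12}p_{21})(\xi-\zeta_1)(\xi-\zeta_2)}$ arising from this split. Partial fraction decomposition, together with the identities $\zeta_1+\zeta_2=-(\mu_1+\mu_2)$ and $\zeta_1\zeta_2=\mu_1\mu_2(1-p_{12}p_{21})$ (so that $\zeta_i+\mu_1+\mu_2=-\zeta_{3-i}$), produces the specific denominators $\zeta_i(\zeta_j-\zeta_i)$ and hence the explicit density in \eqref{expression_psi_transient}, with the $\delta_0$ atom encoding the normalization. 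Substituting back into the convolution, a Fubini interchange in the $p_{12}$-weighted piece recasts the inner convolution against the lifetime density $\mu_1 e^{-\mu_1\cdot}$ as the increment $\Psi(t-y)-\Psi(t-y-z)$, while in the $(1-p_{12})$-weighted piece a further exchange between the Stieltjes orders $dm(y)$ and $\Psi(ds)$ yields the form $\int_0^t m(t-s)\Psi(ds)$. The main obstacle I expect is this last rearrangement: the careful bookkeeping of the $\delta_0$ atom of $\Psi$ when swapping the two Stieltjes integrals, and verifying that the split of $M_1$ genuinely recombines, once integrated against $dm$, into exactly the two integrals appearing on the right-hand side of \eqref{N_1_transient6}.
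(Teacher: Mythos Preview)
Your route is genuinely different from the paper's and is in principle workable, but the final identification step is where the two diverge and where your sketch goes slightly off the rails.

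The paper proceeds probabilistically. For each immigrant it tracks the successive return times $V^{(r)}$ to type~$1$, whose increments are defective with law $p_{12}p_{21}\,{\cal E}(\mu_1)\star{\cal E}(\mu_2)$; writing $N_1(t)$ as a double sum over immigrants $i$ and cycle index $r$ makes $\Psi$ appear \emph{directly} as the defective renewal measure $\Psi(ds)=\sum_{r\ge 0}(p_{12}p_{21})^r G^{(r)}(ds)$. The $p_{12}/(1-p_{12})$ split then has a concrete meaning: it is the conditioning on whether the current type-$1$ sojourn ends by producing a type-$2$ offspring or by dying. Summing the geometric series yields $\widehat\Psi(\xi)=\dfrac{(\xi+\mu_1)(\xi+\mu_2)}{(\xi-\zeta_1)(\xi-\zeta_2)}=1+\dfrac{\mu_1\mu_2\,p_{12}p_{21}}{(\xi-\zeta_1)(\xi-\zeta_2)}$.

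Your analytic path via Campbell and the renewal system for $M_1$ reaches $\widehat{M_1}(\xi)=\dfrac{\xi+\mu_2}{(\xi-\zeta_1)(\xi-\zeta_2)}$ cleanly, but the rational function you quote for $\widehat\Psi$, namely $1+\dfrac{p_{12}p_{21}(\xi+\mu_1+\mu_2)}{(1-p_{12}p_{21})(\xi-\zeta_1)(\xi-\zeta_2)}$, is not the renewal-measure transform above; it is the LST of the density literally printed in \eqref{expression_psi_transient}, which appears to contain a typo (the partial fractions of the correct $\widehat\Psi$ give denominators $(\zeta_i-\zeta_j)$, not $\zeta_i(\zeta_j-\zeta_i)$; one can also check that the total mass of the printed density is not $\tfrac{1}{1-p_{12}p_{21}}$). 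So what ``arises from the split'' in your scheme should be the renewal-measure LST, not the one you wrote. More importantly, the decomposition $M_1(s)=p_{12}\!\int_0^s[\Psi(s)-\Psi(s-z)]\mu_1 e^{-\mu_1 z}\,dz+(1-p_{12})\Psi(s)$ that you need is exactly the single-ancestor version of the paper's cycle-based argument; it is not produced by algebraic manipulation of $\widehat{M_1}$ alone, because there is no unique way to split $M_1$ into two nonnegative pieces without that probabilistic interpretation. Once you import the return-cycle picture to justify the split, the rest of your program (Fubini for the $(1-p_{12})$ piece, rewriting the inner convolution as an increment of $\Psi$ for the $p_{12}$ piece) goes through.
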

Similar analysis is available to obtain a transient expression for $\nbE[N_2(t)]$ for type 2 particles. Note that the expression \eqref{N_1_transient6} depends on the renewal function $m(t)$, which is explicitly available in many processes. For example, $m(t)=\int_0^t\lambda(s)\; ds$ when the immigration process is an NHPP with intensity $\lambda(\cdot)$ whereas $m(t)=(\frac{b}{a})\frac{1- e^{-a\Lambda_t}}{e^{-a\Lambda_t}}$ when the immigration process is GPP with parameters $(a,b,t\mapsto \lambda_t)$. In addition to these two processes considered in this paper, we remark that \eqref{N_1_transient6} for the transient first moment is also available for other non Poisson arrival processes where their renewal functions are known. Typical examples include the case when $\{ S(t),\ t\ge 0\}$ is a fractional Poisson process with parameter $\beta\in (0,1)$ (in which case $m(t)=Ct^\beta$ for some constant $C>0$, see \cite[Expression (26)]{L03}), or when the interarrival times $T_i-T_{i-1}$, $i\ge 1$, follow matrix exponential distributions (in which case $m(t)$ is explicit and given by \cite[Theorem 3.1]{AB96}). 
\begin{proof}
The key idea is to consider the successive passage times from type 2 to type 1 of the $i$th particle arriving at $T_i$, $i\in \nbN^*$ which is type 1. The type of particles is changing between 1 and 2 while it remains in the same type during an exponentially distributed lifetime as long as it is alive (i.e. it has not left the system). Let us introduce the sequence $(V_i^{(r)})_{r\in \nbN}$ representing the succesive time instants of this particle (arriving at time $T_i$) changing back to type 1 after being type 2. In other words, this $i$th particle becomes type 1 again  at the times $T_i + V_i^{(1)}$, $T_i + V_i^{(2)}$, etc. if it has not left the system in between. Then the sequence is expressed as $V_i^{(r)} - V_i^{(r-1)}=W_i^{(r)}$ from $r\ge 1$ with $V_i^{(0)}=0$ where $\{W_i^{(r)},\ r\in \nbN^* \}$ is an iid sequence of defective random variables, with
$$
\nbP(W_i^{(r)}=+\infty)=1-p_{12}p_{21},\quad {\cal D}( W_i^{(r)}| W_i^{(r)}<\infty)={\cal E}(\mu_1)\star {\cal E}(\mu_2),
$$
where $\star$ stands for the convolution operator.
Here, the event $[W_i^{(r)}=+\infty]$ corresponds to the case when the $i$th particle dies (i.e. exits the system) on its $r$th sojourn with type 1 or type 2. It is convenient in the following to write $W_i^{(r)}=Y_{1,i}^{(r)}+Y_{2,i}^{(r)}$ where $Y_{j,i}^{(r)}$ represents the $r$th sojourn time of type $j$ particle for $j=1,2$ and $Y_{1,i}^{(r)}$ independent from $Y_{2,i}^{(r)}$. The distributions are given by ${\cal D} ( Y_{j,i}^{(r)}| Y_{j,i}^{(r)}<\infty )=  {\cal E}(\mu_j)$, $\nbP(Y_{1,i}^{(r)}=\infty)=1-p_{12}$ and $\nbP(Y_{2,i}^{(r)}=\infty)=1-p_{21}$. See Figure \ref{graph_transient} for an illustration.
\begin{figure}[!h]%
\centering
\includegraphics[scale=0.8]{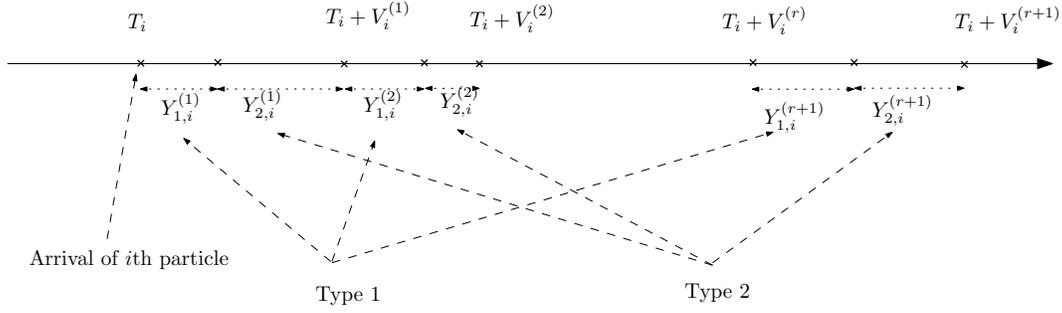}%
\caption{\label{graph_transient} Evolution of $i$th particle.}
\end{figure}
Then, $N_1(t)$ has the following expression
\begin{equation}\label{N_1_transient1}
N_1(t)=\sum_{i=1}^\infty \sum_{r=0}^\infty \nbu_{[T_i + V_i^{(r)}\le t < T_i + V_i^{(r)} + Y_{1,i}^{(r+1)}]},
\end{equation}
as $[T_i + V_i^{(r)}\le t < T_i + V_i^{(r)} + Y_{1,i}^{(r+1)}]$ corresponds to the event that type 1 particle arrived at time $T_i$ is again type 1 at time $t$ after its $r$th return time. Taking the expectation in \eqref{N_1_transient1} and interchanging the order of summation yields 
\begin{eqnarray}
\nbE [N_1(t)]&=& \sum_{r=0}^\infty B_r (p_{12}p_{21})^r,\label{N_1_transient2}\\
B_r &:=& \sum_{i=1}^\infty\nbP(  T_i + V_i^{(r)}\le t < T_i + V_i^{(r)} + Y_{1,i}^{(r+1)}|  V_i^{(r)}<\infty ),\label{N_1_transient3}
\end{eqnarray}
with $[V_i^{(r)}<\infty]=[W_m^{(r)}<\infty,\ m=1,...,r] = [Y_{1,i}^{(m)}<\infty, Y_{2,i}^{(m)}<\infty,\ m=1,...,r]$. Further conditioning on $Y_{1,i}^{(r+1)}$ either $\infty$ or $<\infty$ in \eqref{N_1_transient3} leads to
\begin{multline}\label{N_1_transient4}
B_r= p_{12}\sum_{i=1}^\infty\nbP( T_i + V_i^{(r)}\le t < T_i + V_i^{(r)} + Y_{1,i}^{(r+1)}|  V_i^{(r)}<\infty , Y_{1,i}^{(r+1)} <\infty)\\
+ (1-p_{12})\sum_{i=1}^\infty\nbP( T_i + V_i^{(r)}\le t |  V_i^{(r)}<\infty , Y_{1,i}^{(r+1)} =\infty):=B^1_r+B^2_r,\quad r\ge 0.
\end{multline}
In the following, explicit expressions for $B^1_r$ and $B^2_r$ are derived. Let us denote by $G^{(r)}(\cdot)$ to be the cumulative distribution function (cdf) of $( V_i^{(r)}| V_i^{(r)}<\infty)$ which has the same distribution as $( W_i^{(j)}| W_i^{(j)}<\infty)^{\star (r)}$, in other words, the $r$th convolution of the sum of two exponentials with mean $\mu_1$ and mean $\mu_2$ and also denote $G^{(0)}(ds)=\delta_0(ds)$. Since $Y_{1,i}^{(r+1)}$ is independent from $V_i^{(r)}$, $B_r^2$ in (\ref{N_1_transient4}) admits the expression
\begin{equation}\label{N_1_transient5}
B^2_r= (1-p_{12})\int_0^t m(t-s) G^{(r)}(ds) ,\quad r\ge 0 .
\end{equation}
Also, $T_i$ is independent from $Y_{1,i}^{(r+1)}$ and $V_i^{(r)}$ and $Y_{1,i}^{(r+1)}$ and $V_i^{(r)}$ are identically distributed as $Y_{1,1}^{(r+1)}$ and $V_1^{(r)}$ respectively. Then, we find that $B_r^1$ in (\ref{N_1_transient4}) is expressed as
\begin{multline*}
B^1_r = p_{12}\int_0^t \nbP ( V_1^{(r)} \le t-y < V_1^{(r)} + Y_{1,1}^{(r+1)}| V_1^{(r)}<\infty , Y_{1,1}^{(r+1)} <\infty)\; dm(y)\\
= p_{12}\int_0^t \int_0^{t-y}  \left[G^{(r)}(t-y)-G^{(r)}(t-y-z)\right] \mu_1 e^{-\mu_1 z}dz \; dm(y),\quad r\ge 0.
\end{multline*}
Then using the above expression together with \eqref{N_1_transient5} for \eqref{N_1_transient4}, from \eqref{N_1_transient2} it follows that $\nbE[N_1(t)]$ is given by \eqref{N_1_transient6} where $\Psi(ds)$ is a distribution defined by
\begin{equation}\label{N_1_transient7}
\Psi(ds)=\sum_{r=0}^\infty (p_{12}p_{21})^r G^{(r)}(ds).
\end{equation}
Since $G^{(r)}(ds)$ is the distribution of the sum of two independent Erlang distributions with respective parameters $(r,\mu_1)$ and $(r,\mu_2)$, its LT is given by
$$\widehat{G^{(r)}}(x)=\int_0^\infty e^{-xs} G^{(r)}(ds)=
\left(\frac{\mu_1}{\mu_1+x} \frac{\mu_2}{\mu_2+x}\right)^r,\quad r\ge 0,
  \quad x\ge 0 ,$$
so that, taking the LT on both sides of \eqref{N_1_transient7}, one obtains
\begin{multline}\label{N_1_transient8}
\widehat{\Psi}(x)=\sum_{r=0}^\infty (p_{12}p_{21})^r\widehat{G^{(r)}}(x)=\frac{1}{1- p_{12}p_{21}\frac{\mu_1}{\mu_1+x} \frac{\mu_2}{\mu_2+x}}=1+\frac{\mu_1 \mu_2 p_{12}p_{21}}{x^2 + (\mu_1+\mu_2)x + \mu_1\mu_2(1- p_{12}p_{21} )}\\
=1+\frac{\mu_1 \mu_2 p_{12}p_{21}}{(x-\zeta_1)(x-\zeta_2)}=1+ \mu_1 \mu_2 p_{12}p_{21}\left[ \frac{1}{(\zeta_1-\zeta_2)(x-\zeta_1)} + \frac{1}{(\zeta_2-\zeta_1)(x-\zeta_2)}\right],
\end{multline}
where $\zeta_1$ and $\zeta_2$ are defined by \eqref{expre_zeta1} and \eqref{expre_zeta2}. Inverting \eqref{N_1_transient8} then yields \eqref{expression_psi_transient}.
\end{proof}

\bibliographystyle{alpha}



\section*{Appendix A: A uniform estimate}
Polya's theorem states that if some real valued rv $X_t$ tends in distribution towards $X\in \nbR$ then the corresponding cdf converges uniformly, i.e. $\sup_{x\in \nbR} |\nbP(X_t \le x) - \nbP (X\le x)| \longrightarrow 0$ as $t\to +\infty$, provided that $x\mapsto \nbP (X\le x)$ is continuous. This in particular implies that if $\{x_t,\ t\ge 0\}$ is a sequence of real numbers such that $\lim_{t\to \infty} x_t=x$ then $\lim_{t\to \infty}\nbP(X_t \le x_t) = \nbP (X\le x)$. Although it seems that a multidimensional version of this fact is less known, we present here a proof of it for the sake of completeness.
\begin{lemm}\label{lem_appendixA}\normalfont
Let $\{X_t,\ t\ge 0\}$ be a sequence of random variables with values in $\nbR^k$ converging in distribution towards $X\in \nbR^k$, such that $x\in \nbR^k \mapsto \nbP(X \le x)$ is continuous. Then one has for all $x\in \nbR^k$ that
$$
\lim_{t\to +\infty}\nbP(X_t > x_t) = \nbP (X> x)
$$
where $\lim_{t\to \infty} x_t=x$, $x_t$ lying in $\nbR^k$, and  '$\le$' and '$>$' are understood componentwise.
\end{lemm}
\begin{proof}
Let us recall that the symmetric difference of two sets $A$ and $B$ is defined by $A\Delta B:= [A\setminus B]\cup [B\setminus A]= [A\cap \bar{B}]\cup [B\cap \bar{A}]$ where $\bar{A}$ is the complimentary of the set $A$, which satisfies $|\nbP(A)-\nbP(B)|\le \nbP(A\Delta B)$. Thus, writing $x=(x^1,...,x^k)$, $x_t=(x^1_t,...,x^k_t)$, $X=(X^1,...,X^k)$ and $X_t=(X^1_t,...,X^k_t)$, we have:
\begin{eqnarray}
|\nbP(X_t > x_t) - \nbP(X_t > x)|&\le & \nbP ([X_t > x_t]\Delta [X_t > x])\nonumber\\
& = & \nbP ([X_t > x_t]\setminus [X_t > x]) + \nbP ([X_t > x]\setminus [X_t > x_t])\nonumber\\
&=& \nbP\left( \left\{ \cap_{j=1}^k [X_t^j > x_t^j]\right\} \cap \left\{ \cup_{j=1}^k [X_t^j \le x^j] \right\}\right)\nonumber\\
&& + \nbP\left( \left\{ \cap_{j=1}^k [X_t^j > x^j]\right\} \cap \left\{ \cup_{j=1}^k [X_t^j \le x^j_t] \right\}\right).\label{AppendixA1}
\end{eqnarray}
Note that the following inclusion of event holds: $\{ \cap_{j=1}^k [X_t^j > x_t^j]\} \cap \{ \cup_{j=1}^k [X_t^j \le x^j] \}\subset \cup_{j=1}^k [x_t^j< X_t^j \le x^j]$, with the convention that $[x_t^j< X_t^j \le x^j]=\varnothing$ if $x_t^j\ge x^j$; hence one deduces that $\nbP( \{ \cap_{j=1}^k [X_t^j > x_t^j]\} \cap \{ \cup_{j=1}^k [X_t^j \le x^j] \})\le \sum_{j=1}^k \nbP (x_t^j< X_t^j \le x^j)$. Since convergence in distribution of $X_t$ towards $X$ obviously means convergence in distribution of each entry $X_t^j$ towards $X^j$, Polya's theorem implies that $\nbP (x_t^j< X_t^j \le x^j)= \nbP(X_t\le x^j) - \nbP(X_t\le x^j_t)\longrightarrow 0$ as $t\to +\infty$ for all $j=1,...k$, as indeed continuity of $x\in \nbR^k \mapsto \nbP(X \le x)$ implies continuity of $ x\in\nbR\mapsto \nbP(X^j \le x)$ for all $j=1,...,k$. Thus, one gets that $\lim_{t\to +\infty}\nbP ( \{ \cap_{j=1}^k [X_t^j > x_t^j]\} \cap \{ \cup_{j=1}^k [X_t^j \le x^j] \})=0$. Similarly one has that $\lim_{t\to +\infty} \nbP ( \{ \cap_{j=1}^k [X_t^j > x^j]\} \cap \{ \cup_{j=1}^k [X_t^j \le x^j_t] \})=0$, so the proof is completed from \eqref{AppendixA1} along with the fact that $\lim_{t\to +\infty} \nbP(X_t > x) = \nbP (X> x)$.
\end{proof}

\section*{Appendix B: Proof of Lemma \ref{Lemma_superc}}
The LT of $T\sim \Gamma(\zeta ,1)$ is given by $\nbE(e^{-xT})=(1+x)^{-\zeta}$ for $x\ge 0$. 
By the independence assumption, one then gets
\begin{eqnarray*}
\nbE[e^{-x\Z_T}]&=& \int_0^\infty \nbE[e^{-x\Z_t}] \nbP(T\in dt)= \int_0^\infty e^{-t\psi(x)}\nbP(T\in dt)\\
&=& \{1+  \psi(x) \}^{-\zeta},
\end{eqnarray*}
which completes the proof.

\end{document}